\titleformat*{\section}{\normalfont\bfseries}
\titleformat*{\subsection}{\normalfont\bfseries}
\newtheorem{theorem}{Theorem}[section]
\newtheorem{lemma}[theorem]{Lemma}
\newtheorem{cor}[theorem]{Corollary}
\newtheorem{prop}[theorem]{Proposition}
\newcommand{\adj}{\operatorname{adj}}
\newcommand{\sign}{\operatorname{sign}}
\newcommand{\sgn}{\operatorname{sgn}}
\newcolumntype{L}[1]{>{\raggedright\let\newline\\\arraybackslash\hspace{0pt}}m{#1}}
\newcolumntype{C}[1]{>{\centering\let\newline\\\arraybackslash\hspace{0pt}}m{#1}}
\newcolumntype{R}[1]{>{\raggedleft\let\newline\\\arraybackslash\hspace{0pt}}m{#1}}
\tikzstyle{edge}=[decoration={markings,mark=at position 0.7 with {\arrow{Stealth[scale=1.5]}}},
\title{Sign pattern matrices associated with cycle graphs that require algebraic positivity}
\author{Sunil Das \footnote{Email address: {\tt sunilmath92@gmail.com}}}
\affil{\small Department of Mathematics, Faculty of Science and Technology (IcfaiTech), ICFAI Foundation for Higher Education, Hyderabad, Telangana - 501203, India}
\date{}
\begin{document}
\maketitle

\begin{abstract}
\noindent A real matrix is said to be positive if its every entry is positive, and a real square matrix $A$ is algebraically positive if there exists a real polynomial $f$ such that $f(A)$ is a positive matrix. A sign pattern matrix $A$ is said to require a property if all matrices having sign pattern as $A$ have that property. In this paper, we characterize all sign pattern matrices associated with cycle graphs that require algebraic positivity. 
\end{abstract}

\noindent{\small\bf Keywords:} {\small Algebraically positive, Sign pattern, Require algebraic positivity, Sign non-singular.}

\noindent{\small\bf AMS Subject Classifications:} {\small 15B35, 15B48, 05C50.}

\section{Introduction}
A real matrix is said to be positive if its every entry is positive. Positive matrices has applications in population models, Markov chains, economic model, low-dimensional topology, epidemiology etc. (see \cite{M00}). A generalization of positive matrices was given by Kirkland, Qiao, and Zhan as algebraically positive matrices. A real square matrix $A$ is algebraically positive if there exists a real polynomial $f$ such that $f(A)$ is a positive matrix. The characterization of algebraically positive matrices was given by the following result.

\begin{theorem}[\cite{KQZ16}]\label{thm1.1}
A real square matrix is algebraically positive if and only if it has a simple real eigenvalue and corresponding left and right positive eigenvectors.
\end{theorem}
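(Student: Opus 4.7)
The plan is to prove the two directions separately, with the Perron--Frobenius theorem doing the work in one direction and the spectral projection formula in the other.

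For the forward implication, suppose $f(A)$ is entrywise positive for some real polynomial $f$. The Perron--Frobenius theorem applied to $f(A)$ produces a simple positive eigenvalue $\rho$ strictly dominant in modulus, along with entrywise positive right and left Perron eigenvectors $\V$ and $\U$ of $f(A)$. By the spectral mapping theorem, $\rho = f(\lambda)$ for some $\lambda \in \sigma(A)$; more precisely, the generalized eigenspace of $f(A)$ at $\rho$ decomposes as the direct sum of the generalized eigenspaces of $A$ at those $\mu \in \sigma(A)$ with $f(\mu)=\rho$. Since $\rho$ is simple in $f(A)$, this forces a unique such $\lambda$, which is algebraically simple in $A$. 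The one-dimensional Perron eigenspaces of $f(A)$ must then coincide with the right and left $\lambda$-eigenspaces of $A$, so $\V$ and $\U$ are positive eigenvectors of $A$ for $\lambda$. Finally, $\lambda$ is real because $A\V = \lambda\V$ with $A$ real and $\V$ a real positive vector.

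For the reverse implication, assume $A$ has a simple real eigenvalue $\lambda$ with positive right and left eigenvectors $\V$ and $\U$, normalized so that $\U^T\V = 1$. The spectral projection onto the one-dimensional $\lambda$-eigenspace is $P_\lambda = \V\U^T$, which is entrywise positive. Because $\lambda$ is a simple root of the minimal polynomial $m_A$, write $m_A(x) = (x-\lambda)q(x)$ with $q(\lambda) \neq 0$, and set $f(x) = q(x)/q(\lambda)$. Then $f \in \mathbb{R}[x]$, $f(\lambda) = 1$, and on every other eigenvalue $\mu$ of $A$ the polynomial $f$ vanishes to an order matching the size of the largest Jordan block at $\mu$; a standard Jordan-form calculation gives $f(A) = P_\lambda > 0$, so $A$ is algebraically positive.

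I expect the main subtlety to lie in the forward direction, specifically in promoting the one-dimensionality of the Perron eigenspace of $f(A)$ to \emph{algebraic} simplicity of $\lambda$ in $A$. This requires tracking how algebraic multiplicities behave under application of $f$ and simultaneously ruling out (i) collisions $f(\mu_1)=f(\mu_2)=\rho$ for distinct eigenvalues of $A$ and (ii) defectiveness of $\lambda$, whereby $f(A)$ would inherit a non-trivial Jordan structure at $\rho$. Once these are handled, the reverse direction is essentially mechanical: the real polynomial $f$ exists because non-real eigenvalues of $A$ appear in conjugate pairs, forcing $q \in \mathbb{R}[x]$, and the positivity of $f(A)$ is built into the rank-one structure of $P_\lambda$.
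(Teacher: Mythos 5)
Your proof is correct, but note that this theorem is stated in the paper only as a citation to \cite{KQZ16}, with no internal proof to compare against; your argument is essentially the standard one from that source. Both directions are sound: the forward direction correctly combines Perron--Frobenius for the positive matrix $f(A)$ with the fact that the generalized eigenspace of $f(A)$ at $\rho$ is the direct sum of the generalized eigenspaces of $A$ at the preimages of $\rho$ (the left eigenvector claim implicitly runs the same argument on $A^T$, which is fine), and the converse correctly realizes the spectral projection $\V\U^T$ as $q(A)/q(\lambda)$ with $m_A(x)=(x-\lambda)q(x)$, which is entrywise positive by the positivity of $\U$ and $\V$.
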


We are interested in answering the problem of identifying an algebraically positive matrix from the signs its entries. A sign pattern matrix is a matrix whose every entry is a symbol from the set $\{+,-,0\}$. The qualitative class of a matrix $A$, denoted by $Q(A)$, is defined by the set of all real matrices obtained from $A$ by replacing \enquote*{$+$} by some positive number, \enquote*{$-$} by some negative number and \enquote*{0} by the zero number. A sign pattern matrix $A$ allows a property $P$ if at least one matrix in $Q(A)$ has the property $P$, and requires property $P$ if all matrices in $Q(A)$ have the property $P$. For our convenience, we use the symbols $+_0,-_0,\#$ in sign pattern matrices. Any entry associated $+_0,-_0,\#$ will be considered as a nonnegative, a non-positive, and an arbitrary real number, respectively.  

To represent and analyze sign patterns efficiently, we use concepts of graphs. Let us recall from \cite{BS95, HL14} that the digraph of a sign pattern matrix $A$ of order $n$, denoted by $D(A)$, is defined by a directed graph with vertex set $\{1,2,\ldots,n\}$, where $(\overrightarrow{i,j})$ is an arc of $D(A)$ if and only if $a_{ij}\neq0$. The graph of a sign pattern matrix $A$ of order $n$, denoted by $G(A)$, is defined by an undirected graph with vertex set $\{1,2,\ldots,n\}$, where $[i,j]$ is an edge of $G(A)$ if and only if either $a_{ij}\neq0$ or $a_{ji}\neq0$. 

A directed path $\alpha$ from vertex $u$ to vertex $v$ in $D(A)$ is a sequence $(u=i_0,i_1,\ldots,i_k=v)$ of distinct vertices such that $(\overrightarrow{i_{t-1},i_t})$ is an arc in $D(A)$ for $t=1,2,\ldots,k$. If all vertices are not distinct, then it is called a directed walk. The length of a directed path is the number of arcs lying on the path. Here length of the directed path $\alpha$ is $k$. Sign of the path $\alpha$ is defined by $\sign(\alpha)=\sign(a_{i_0i_1}a_{i_1i_2}\cdots a_{i_{k-1}i_k})$. A directed cycle $\gamma$ of length $k$ or a directed $k$-cycle in $D(A)$ is a sequence $(i_1,i_2,\ldots,i_k,i_1)$ of vertices such that $i_1,i_2,\ldots,i_k$ are distinct and $(\overrightarrow{i_1,i_2}), (\overrightarrow{i_2,i_3}), \ldots, (\overrightarrow{i_k,i_1})$ are arcs in $D(A)$. Sign of the directed cycle $\gamma$ is defined by $\sign(\gamma)=\sign(a_{i_1i_2}a_{i_2i_3}\cdots a_{i_ki_1})$. A square matrix $A$ is said to be irreducible if $D(A)$ is strongly connected, that is, for any ordered pair $(i,j)$ of vertices there is a directed path from $i$ to $j$ in $D(A)$. In $G(A)$, an (undirected) cycle of length $k$ is a sequence $(i_1,i_2,\ldots,i_k,i_1)$ of vertices such that $i_1,i_2,\ldots,i_k$ are distinct and $[i_1,i_2], [i_2,i_3], \ldots, [i_k,i_1]$ are edges in $G(A)$.

Characterization of sign patterns allowing algebraic positivity is still not solved completely, but many results to identify those patterns can be observed in \cite{AP19, D22, DB19, KQZ16}. The problem of identifying sign patterns requiring algebraically positivity is widely open. Path and star sign patterns requiring algebraic positivity are described in \cite{DB19}, and list of all 3-by-3 sign patterns requiring algebraic positivity are presented in \cite{AP19}. The graphs of 3-by-3 tree sign pattern matrices are either stars or paths. For any other 3-by-3 irreducible sign pattern matrix, the underlying undirected graph is a cycle. In this paper, we attempt to generalize the problem of requiring algebraic positivity for cycle graphs. In the process, we also give some results in identifying matrices which are not algebraically positive and whose underlying graph is an arbitrary graph. 

In Section \ref{sec2}, we provide some necessary conditions for a sign pattern matrix to require algebraic positivity. In Section \ref{sec3}, we identify all sign patterns having cycle graphs that require algebraic positivity. Finally, we summerize all main results in Section \ref{sec4}.

Throughout this paper, $\mathbb{R}$ denotes the set of all real numbers, $\mathbb{R}^n$ is the $n$-dimensional Euclidean space, and $A^T$ is the transpose of the matrix $A$. A vector in $\mathbb{R}^n$ is denoted by $\bar{x}$ and its $i$-th entry is denoted by $x_i$. A zero vector is denoted by $\bar{0}$ and an identity matrix is denoted by $I$, whose sizes will be clear from the context. We denote the adjugate of a square matrix $A$ by $\adj(A)$.

\section{Identifying sign patterns that does not require algebraic positivity}\label{sec2}
The following lemma from \cite[pp. 78-80]{HJ13} gives a relation between left and right eigenvectors corresponding to two distinct eigenvalues of a matrix. 

\begin{lemma}[\cite{HJ13}]\label{lem2.1}
Let $A$ be a real square matrix and $\bar{x}, \bar{y} \in \mathbb{R}^n \setminus \{\bar{0}\}$ such that $A\bar{x} = \lambda \bar{x}$ and $\bar{y}^T A = \mu \bar{y}^T$ for some $\lambda,\mu\in\mathbb{R}$. 
\begin{enumerate}
\item If $\lambda\neq\mu$, then $\bar{y}^T\bar{x}=0$.
\item If $\lambda=\mu$ and $\lambda$ has geometric multiplicity 1, then its algebraic multiplicity is 1 if and only if $\bar{y}^T\bar{x}\neq0$.
\end{enumerate}
\end{lemma}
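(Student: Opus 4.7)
The plan is to handle the two parts separately: part (1) is a short associativity trick, while part (2) requires identifying $\bar{y}^T \bar{x} = 0$ with $\bar{x}$ lying in the range of $A - \lambda I$, and then connecting this condition to the algebraic multiplicity.

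For part (1), I would compute the scalar $\bar{y}^T A \bar{x}$ in two different ways. Grouping it as $\bar{y}^T(A\bar{x})$ gives $\lambda \bar{y}^T \bar{x}$, while grouping as $(\bar{y}^T A)\bar{x}$ gives $\mu \bar{y}^T \bar{x}$. Subtracting yields $(\lambda - \mu)\bar{y}^T \bar{x} = 0$, and since $\lambda \neq \mu$, the scalar $\bar{y}^T \bar{x}$ must vanish.

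For part (2), the first step is to observe that since $\lambda$ has geometric multiplicity $1$, the kernel of $A - \lambda I$ is $1$-dimensional and spanned by $\bar{x}$; because row rank equals column rank, the left null space of $A - \lambda I$ is also $1$-dimensional and hence spanned by $\bar{y}$. The range of $A - \lambda I$ is the orthogonal complement of this left null space, so $\bar{y}^T \bar{z} = 0$ if and only if $\bar{z}$ lies in the range of $A - \lambda I$. Specialising to $\bar{z} = \bar{x}$ gives the equivalence
\[
\bar{y}^T \bar{x} = 0 \quad \Longleftrightarrow \quad \bar{x} \in \operatorname{range}(A - \lambda I).
\]

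The second step is to show that $\bar{x} \in \operatorname{range}(A - \lambda I)$ if and only if the algebraic multiplicity of $\lambda$ exceeds $1$. For the forward direction, if the algebraic multiplicity is at least $2$, Jordan form theory supplies a generalized eigenvector $\bar{v}$ with $(A - \lambda I)\bar{v} \neq \bar{0}$ and $(A - \lambda I)^2 \bar{v} = \bar{0}$, so $(A - \lambda I)\bar{v}$ is a nonzero element of $\ker(A - \lambda I)$, hence a scalar multiple of $\bar{x}$, placing $\bar{x}$ in the range. Conversely, if $\bar{x} = (A - \lambda I)\bar{v}$ for some $\bar{v}$, then $\bar{x}$ and $\bar{v}$ are linearly independent and span an $A$-invariant subspace on which $A$ has characteristic polynomial $(t - \lambda)^2$, so $\lambda$ has algebraic multiplicity at least $2$. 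Chaining this with the previous equivalence and contrapositing gives the desired statement: algebraic multiplicity $1$ is equivalent to $\bar{y}^T \bar{x} \neq 0$.

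The main delicate point is justifying the Jordan-form equivalence used in the second step. A heavier but entirely mechanical alternative would be to write $A = PJP^{-1}$ in (possibly complex) Jordan canonical form, note that the unique Jordan block for $\lambda$ has size equal to the algebraic multiplicity $k$, identify $\bar{x}$ (up to scalar) with the first column and $\bar{y}$ (up to scalar) with the last row of $P$ and $P^{-1}$ associated with that block, and then read $\bar{y}^T \bar{x}$ off the identity $P^{-1}P = I$: the relevant entry of the identity is $1$ if $k = 1$ and $0$ if $k > 1$.
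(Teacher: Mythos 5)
Your proof is correct. Note, however, that the paper does not prove this lemma at all: it is imported verbatim from Horn and Johnson \cite{HJ13} (pp.\ 78--80), so there is no internal argument to compare against. What you have produced is a valid self-contained derivation. Part (1) is the standard biorthogonality computation, exactly what any source would do. For part (2), your key move --- using geometric multiplicity $1$ to see that the left null space of $A-\lambda I$ is spanned by $\bar{y}$, hence $\bar{y}^T\bar{x}=0$ if and only if $\bar{x}\in\operatorname{range}(A-\lambda I)$, and then showing this range condition is equivalent to algebraic multiplicity $\geq 2$ --- is sound in both directions: the forward direction legitimately produces a generalized eigenvector of order $2$ (real, since $\lambda$ is real and the relevant kernel dimensions are field-independent), and the converse correctly extracts a $2$-dimensional $A$-invariant subspace on which the restriction has characteristic polynomial $(t-\lambda)^2$, forcing the multiplicity up via the block-triangular form. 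This is arguably more elementary than Horn--Johnson's own treatment (their Lemma 1.4.12, which reduces to a bordered similarity form), at the cost of being slightly longer. The only place needing care is your sketched ``heavier alternative'' via $P^{-1}P=I$: there $\bar{y}$ is only proportional to the relevant row of $P^{-1}$, and one must rule out degenerate scalings; since you offer it only as an aside and your main argument does not rely on it, this does not affect correctness.
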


The following lemma shows that any non-negative left or right eigenvector of an algebraically positive matrix must be a scalar multiple of its left or right positive eigenvector, whose existence is given by Theorem \ref{thm1.1}. 

\begin{lemma}\label{lem2.2}
If a matrix $A$ has a non-negative left or right eigenvector $\bar{x}$ with some $x_i=0$, then $A$ is not algebraically positive. 
\end{lemma}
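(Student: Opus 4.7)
The plan is to argue by contradiction, leveraging Theorem \ref{thm1.1} to obtain a simple real eigenvalue $\lambda$ of $A$ with a positive right eigenvector $\bar{v}$ and a positive left eigenvector $\bar{u}$, and then to use Lemma \ref{lem2.1} to rule out the existence of any additional non-negative eigenvector with a zero coordinate. I will write the argument for the case where $\bar{x}$ is a non-negative right eigenvector; the left-eigenvector case is symmetric (apply the same reasoning to $A^T$, which is also algebraically positive with the roles of $\bar{u}$ and $\bar{v}$ swapped).

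Assume for contradiction that $A$ is algebraically positive and $A\bar{x} = \mu\bar{x}$ for some $\mu \in \mathbb{R}$, with $\bar{x}\ge 0$, $\bar{x}\neq \bar{0}$, and some coordinate $x_i=0$. I would split into two cases according to whether $\mu=\lambda$ or $\mu\neq\lambda$. If $\mu\neq\lambda$, then part (1) of Lemma \ref{lem2.1} forces $\bar{u}^T\bar{x}=0$, but since $\bar{u}$ has strictly positive entries and $\bar{x}$ is a nonzero non-negative vector, $\bar{u}^T\bar{x}>0$, a contradiction. If $\mu=\lambda$, then $\bar{x}$ lies in the eigenspace of $\lambda$; because $\lambda$ is simple, its geometric multiplicity is $1$, so $\bar{x}=c\bar{v}$ for some scalar $c$. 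Since $\bar{v}$ has no zero coordinates and $\bar{x}$ does, we must have $c=0$, contradicting $\bar{x}\neq\bar{0}$.

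The argument essentially just combines Theorem \ref{thm1.1} (existence of the positive eigenvector pair for the simple real eigenvalue) with the orthogonality fact from Lemma \ref{lem2.1}. There is no real obstacle: the only thing that needs any care is that $\mu$ (the eigenvalue corresponding to $\bar{x}$) must be real, but this is automatic because $\bar{x}$ is a real vector and $A\bar{x}=\mu\bar{x}$ together with $\bar{x}\neq\bar{0}$ force $\mu\in\mathbb{R}$. For completeness, I would conclude by noting that the left-eigenvector case follows by applying the established right-eigenvector version to $A^T$, using that $A^T$ is algebraically positive whenever $A$ is (if $f(A)$ is positive, then $f(A)^T=f(A^T)$ is positive as well).
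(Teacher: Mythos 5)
Your argument is correct and follows essentially the same route as the paper: both invoke Theorem \ref{thm1.1} to get the simple eigenvalue with positive left and right eigenvectors, use simplicity to rule out $\bar{x}$ being an eigenvector for that eigenvalue (the paper phrases this as linear independence of $\bar{x}$ and the positive eigenvector forcing $\lambda\neq\mu$, you phrase it as the case $\mu=\lambda$ being impossible), and then contradict Lemma \ref{lem2.1}.1 via $\bar{u}^T\bar{x}>0$. Your explicit remarks on the realness of $\mu$ and on handling the left-eigenvector case through $A^T$ are fine and consistent with the paper's ``similarly'' closing.
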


\begin{proof}
Suppose that $A\bar{x}=\lambda \bar{x}$, where $\bar{x}$ is non-negative right eigenvector of $A$ with some $x_i=0$. If $A$ is algebraically positive, then by Theorem \ref{thm1.1}, it has a simple eigenvalue $\mu$ such that $A\bar{z}=\mu \bar{z}$ and $\bar{y}^T A = \mu \bar{y}^T$ for some positive vectors $\bar{y}$ and $\bar{z}$. Since $\bar{x}$ and $\bar{z}$ are independent, and $\mu$ is simple eigenvalue, $\lambda\neq\mu$. But $\bar{y}^T\bar{x}\neq0$, a contradiction to Lemma \ref{lem2.1}.1. Therefore $A$ is not algebraically positive. Similarly, if $A$ has a non-negative left eigenvector $\bar{x}$ with some $x_i=0$, then $A$ is not algebraically positive. 
\end{proof}

Let $A(i,:)$ be the matrix obtained from $A$ by deleting $i$-th row, and $A(:,j)$ be the matrix obtained from $A$ by deleting $j$-th column. The following result gives a sufficient condition to identify sign patterns that do not require algebraic positivity.

\begin{prop}\label{prop2.3}
Let $B\in Q(A)$. If there exist $t,\lambda\in\mathbb{R}$ such that each column of the matrix $(B-\lambda I)(t,:)$ is either a zero vector or has both positive and negative entries, then $A$ does not require algebraic positivity.
\end{prop}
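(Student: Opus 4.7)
The plan is to exhibit a matrix $C \in Q(A)$ that is not algebraically positive and then invoke Lemma \ref{lem2.2}. I would let $\bar{x} \in \mathbb{R}^n$ be the vector with $x_i = 1$ for $i \neq t$ and $x_t = 0$, and try to arrange for $\bar{x}$ to be a non-negative left eigenvector of $C$ with eigenvalue $\lambda$. The identity $\bar{x}^T(C - \lambda I) = \bar{0}^T$ unwinds coordinate by coordinate into the statement that every column of $(C - \lambda I)(t,:)$ sums to zero, so the problem reduces to building $C \in Q(A)$ with that column-sum property.

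The hypothesis provides, for each column of $(B - \lambda I)(t,:)$, either that it vanishes or that it contains entries of both signs. I would construct $C$ by rescaling the entries of $B$ while preserving the sign pattern: keep each diagonal entry $c_{jj} = b_{jj}$ for $j \neq t$, copy row $t$ of $B$ into row $t$ of $C$ (it is irrelevant to the left-eigenvector equation), and for each column $j$ multiply the off-diagonal entries $b_{ij}$ with $i \neq t$ by positive constants chosen so that $\sum_{i \neq t} c_{ij}$ equals $\lambda$ if $j \neq t$ and equals $0$ if $j = t$. Since any positive combination of numbers of mixed signs covers all of $\mathbb{R}$, and a single-sign combination covers an entire open half-line, these scalings are feasible whenever the relevant column of $(B-\lambda I)(t,:)$ either vanishes or is sign-alternating.

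The main technical point, and the place where I expect the only real obstacle, is the case in which the off-diagonal entries of column $j$ (for $j \neq t$) all carry a single sign $s$. Here the hypothesis forces $b_{jj} - \lambda$ to carry the opposite sign, and this is exactly the compatibility needed so that the scalar target $\lambda - b_{jj}$ for the off-diagonal sum has sign $s$ and can thus be realized by positive multipliers. Once this case check is done, the construction yields $C \in Q(A)$ with $\bar{x}$ a non-negative left eigenvector satisfying $x_t = 0$, and Lemma \ref{lem2.2} concludes that $C$ is not algebraically positive, whence $A$ does not require algebraic positivity.
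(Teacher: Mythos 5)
Your proposal is correct and follows essentially the same route as the paper: rescale the entries of a matrix in $Q(A)$ (keeping $\lambda$ fixed) so that every column of $(C-\lambda I)(t,:)$ sums to zero, making the non-negative vector with $x_t=0$ a left eigenvector, and then apply Lemma \ref{lem2.2}. The only difference is that you spell out the feasibility of the rescaling (including the single-sign off-diagonal case compensated by $b_{jj}-\lambda$), which the paper leaves implicit.
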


\begin{proof}
Suppose that each column of the matrix $(B-\lambda I)(t,:)$ is either a zero vector or has both positive and negative entries. Keeping $\lambda$ fixed, we can choose the entries of $B$ in such a way that each column sum of $(B-\lambda I)(t,:)$ is zero so that $\bar{x}^TB=\lambda \bar{x}^T$, where
$$x_i=\begin{cases}
1, & \mbox{if } i\neq t; \\
0, & \mbox{if } i=t.
\end{cases}$$ 
So by Lemma \ref{lem2.2}, $B$ is not algebraically positive, and thus $A$ does not require algebraic positivity. 
\end{proof}

If $B\in Q(A)$ and there exist $t,\lambda\in\mathbb{R}$ such that each row of the matrix $(B-\lambda I)(:,t)$ is either a zero vector or has both positive and negative entries, then we can similarly show that $A$ does not require algebraic positivity. Consequently, we have the following result to be used numerously in later part of this paper.

\begin{cor}\label{cor2.4}
Let $A$ be a sign pattern matrix. If there exists $t$ such that the $t$-th column of $A(t,:)$ is either a zero vector or has both $+$ and $-$, and all other columns has a $+$, then $A$ does not require algebraic positivity.
\end{cor}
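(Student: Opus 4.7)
The plan is to apply Proposition~\ref{prop2.3}. Given the index $t$ from the hypothesis, the goal is to produce $B\in Q(A)$ and $\lambda\in\mathbb{R}$ making every column of $(B-\lambda I)(t,:)$ either the zero vector or possess both a positive and a negative entry.

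I will fix any $\lambda>0$ and choose the magnitudes of $B$ column by column so that each column of $(B-\lambda I)(t,:)$ has zero sum. Concretely, summing the $j$-th column of $(B-\lambda I)(t,:)$ gives $\sum_{i\neq t}b_{ij}$ when $j=t$ and $\sum_{i\neq t}b_{ij}-\lambda$ when $j\neq t$, so the target identities are $\sum_{i\neq t}b_{it}=0$ and $\sum_{i\neq t}b_{ij}=\lambda$ for $j\neq t$. For column $t$, the hypothesis on the $t$-th column of $A(t,:)$ (either the zero vector or containing both $+$ and $-$) makes sum $0$ realizable by balancing magnitudes. For column $j\neq t$, the guaranteed $+$ entry in column $j$ of $A(t,:)$ can be scaled up, while any $-$ entries are scaled down in modulus, to achieve sum $\lambda$. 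The row-$t$ entries of $B$ are filled in arbitrarily consistently with $A$, since they do not affect any of the above sums.

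Once all column sums of $(B-\lambda I)(t,:)$ vanish, each column is either the zero vector or contains both a positive and a negative entry, because any nonzero vector summing to zero must have both signs. Thus Proposition~\ref{prop2.3} applies with this $B$, $t$, and $\lambda$, and $A$ does not require algebraic positivity. The only mildly delicate step is the column-wise magnitude assignment; the hypothesis that each column $j\neq t$ of $A(t,:)$ contains a $+$ is precisely what ensures the prescribed positive column sum is attainable within the sign pattern.
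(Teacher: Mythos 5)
Your proposal is correct and follows essentially the same route as the paper, which states Corollary~\ref{cor2.4} as an immediate consequence of Proposition~\ref{prop2.3}: you simply make explicit the choice of $\lambda>0$ and of the magnitudes of $B$ (zero column sums of $(B-\lambda I)(t,:)$), which is exactly the construction used inside the proof of Proposition~\ref{prop2.3} to produce the nonnegative left eigenvector with $x_t=0$. The only remark is that your verification is somewhat more detailed than the paper's, which leaves this instantiation implicit.
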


Let $A$ be a sign pattern matrix, and let the matrices $A_+$, $A_-$ and $B_A$ be defined as follows:
$$(A_+)_{ij}=\begin{cases}
+, & \mbox{if }a_{ij}=+;\\
0, & \mbox{if }a_{ij}\neq+.
\end{cases} \qquad
(A_-)_{ij}=\begin{cases}
-, & \mbox{if }a_{ij}=-;\\
0, & \mbox{if }a_{ij}\neq-.
\end{cases} \qquad B_A=A_+-A_-^T$$
Recall from \cite{D22} that a sign pattern matrix $A$ is AP-irreducible if every row and column of $A$ contains a $+$, and both $A$ and $B_A$ are irreducible. If a sign pattern matrix requires some property, then it also allows that property. So we have the following result from \cite{AP19, KQZ16}.

\begin{lemma}\label{lem2.5}
If a sign pattern matrix requires algebraic positivity, then it is AP-irreducible.
\end{lemma}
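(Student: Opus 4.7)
The plan is to prove the contrapositive, reducing to the known characterization of sign patterns that allow algebraic positivity. Since any property required by $A$ is also allowed by $A$, if $A$ requires algebraic positivity then there exists $B \in Q(A)$ that is algebraically positive, and hence $A$ allows algebraic positivity. I would then cite the results in \cite{KQZ16, AP19} that any sign pattern allowing algebraic positivity must be AP-irreducible, which would conclude the proof.

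For a self-contained argument, I would verify each of the four defining conditions of AP-irreducibility in turn, using Theorem \ref{thm1.1} and Lemma \ref{lem2.2} as the main tools. For the row and column conditions, if row $i$ of $A$ has no $+$ then for every $B \in Q(A)$ the $i$-th row of $B$ is non-positive; combined with Theorem \ref{thm1.1}, a positive right eigenvector $\bar{z}$ with eigenvalue $\mu$ would force $\mu z_i \le 0$, and a dual analysis at a second row where a $+$ does appear gives a contradictory inequality. The column case is symmetric, and Corollary \ref{cor2.4} can be invoked to streamline both. For irreducibility of $A$, if $D(A)$ is not strongly connected then every $B \in Q(A)$ is block upper triangular after a permutation, so eigenvectors for eigenvalues of a proper diagonal block vanish on the complementary block, which contradicts the requirement in Theorem \ref{thm1.1} of strictly positive left and right eigenvectors corresponding to a common simple eigenvalue.

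The step I expect to demand the most care is the irreducibility of $B_A$. Unlike the zero-pattern conditions on $A$, the matrix $B_A = A_+ - A_-^T$ mixes the positions of $+$ and $-$ entries across transposition, so a block structure on $B_A$ does not translate directly into block structure on any single $B \in Q(A)$. I would handle this by using the partition of indices induced by the reducibility of $B_A$ to construct a specific $B \in Q(A)$ together with an associated non-negative left or right eigenvector having a zero coordinate on one part of the partition, then invoke Lemma \ref{lem2.2} to rule out algebraic positivity of that particular $B$. Matching this construction cleanly to the argument already present in \cite{KQZ16, AP19} is the technical heart of the proof.
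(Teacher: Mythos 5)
Your first paragraph is exactly the paper's argument: the paper proves Lemma \ref{lem2.5} simply by noting that requiring a property implies allowing it and then citing \cite{AP19, KQZ16} (with the AP-irreducibility notion from \cite{D22}) for the fact that allowing algebraic positivity forces AP-irreducibility. The additional ``self-contained'' sketch you outline is not needed for this lemma and is not attempted in the paper, so the proposal is correct and takes essentially the same route.
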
 

We have the following results from \cite{KQZ16}.

\begin{lemma}[\cite{KQZ16}]\label{lem2.6}
The following statements are equivalent for a square matrix $A$.
\begin{enumerate}
\item $A$ is algebraically positive.
\item $-A$ is algebraically positive.
\item $P^TAP$ is algebraically positive for every permutation matrix $P$.
\item $A-\alpha I$ is algebraically positive for every real number $\alpha$.
\end{enumerate} 
\end{lemma}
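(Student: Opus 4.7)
The plan is to prove all four equivalences directly from the definition of algebraic positivity via polynomial manipulation, since each of the listed operations on $A$ can be undone by a corresponding transformation of the polynomial (or, in the case of permutation similarity, by conjugating the polynomial's value). Throughout, let $f$ denote a real polynomial witnessing algebraic positivity, so that $f(A)$ has every entry positive.

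First I would handle $(1)\Leftrightarrow(2)$. Given $f$ with $f(A)>0$ entrywise, define $g(x):=f(-x)$; then $g$ is again a real polynomial and $g(-A)=f(A)$ is positive, so $-A$ is algebraically positive. The converse is identical by symmetry, since $-(-A)=A$. The same maneuver settles $(1)\Leftrightarrow(4)$: given $\alpha\in\mathbb{R}$, set $h(x):=f(x+\alpha)$, which is a real polynomial satisfying $h(A-\alpha I)=f(A)$; this is positive, so $A-\alpha I$ is algebraically positive. Conversely, if $A-\alpha I$ is algebraically positive via a polynomial $\tilde h$, then $\tilde h(x-\alpha)$ witnesses algebraic positivity of $A$.

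For $(1)\Leftrightarrow(3)$, I would exploit that permutation matrices satisfy $P^TP=I$, whence $(P^TAP)^k=P^TA^kP$ for every nonnegative integer $k$. Extending this by linearity gives $f(P^TAP)=P^Tf(A)P$ for any real polynomial $f$. Since left- and right-multiplying by $P^T$ and $P$ merely permutes the rows and columns of $f(A)$, the matrix $P^Tf(A)P$ is positive whenever $f(A)$ is positive. Hence $(1)\Rightarrow(3)$, and $(3)\Rightarrow(1)$ follows by taking $P=I$ (or, more generally, by applying the same argument with $P$ replaced by $P^T$, noting $P^T$ is also a permutation matrix and $(P^T)^T(P^TAP)P^T=A$).

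None of these steps poses a serious obstacle; the only points requiring a line of verification are the identity $f(P^TAP)=P^Tf(A)P$ for polynomials and the fact that conjugation by a permutation matrix preserves entrywise positivity, both of which are immediate. The argument deliberately avoids invoking Theorem \ref{thm1.1}, but one could equally well run the proof through the eigenstructure characterization: the eigenvalues of $-A$, $P^TAP$, and $A-\alpha I$ are respectively $-\lambda$, $\lambda$, and $\lambda-\alpha$ for each eigenvalue $\lambda$ of $A$, with corresponding positive eigenvectors $\bar x$, $P^T\bar x$, $\bar x$ (and analogously for left eigenvectors), so simplicity and positivity are preserved throughout.
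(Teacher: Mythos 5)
Your proof is correct. Note that the paper itself offers no argument for this lemma --- it is simply quoted from \cite{KQZ16} --- so there is nothing in the text to compare against; your direct verification via polynomial substitution ($g(x)=f(-x)$, $h(x)=f(x+\alpha)$, and $f(P^TAP)=P^Tf(A)P$ using $P^TP=I$), together with the trivial converses obtained by taking $P=I$ or $\alpha=0$, is exactly the standard argument, and your closing remark that the same equivalences follow from the eigenvalue/eigenvector characterization in Theorem \ref{thm1.1} is also accurate.
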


A matrix is said to be stable (semi-stable) if all its eigenvalues have negative (non-positive) real part. A sign pattern matrix is potentially stable (semi-stable) if its qualitative class contains a stable (semi-stable) matrix. 

\begin{prop}\label{prop2.7}
Let $A$ be a sign pattern matrix such that it has either a row containing no $-$ or a column containing no $-$. If $A$ is potentially semi-stable, then it does not require algebraic positivity.
\end{prop}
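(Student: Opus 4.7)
My plan is to fix a semi-stable matrix $B \in Q(A)$ (guaranteed by the potential semi-stability hypothesis) and show that $B$ itself is not algebraically positive by combining the sign restriction on the privileged row (or column) with Theorem \ref{thm1.1}. Without loss of generality I will assume that row $i$ of $A$ contains no $-$; the column case is symmetric, swapping the roles of right and left eigenvectors in everything that follows.

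Suppose first that row $i$ of $A$ has at least one $+$. Then for any $B \in Q(A)$, row $i$ of $B$ is non-negative with at least one strictly positive entry. Choose $B$ to be semi-stable, and suppose toward a contradiction that $B$ is algebraically positive. Theorem \ref{thm1.1} supplies a simple real eigenvalue $\lambda$ with a positive right eigenvector $\bar{z}$. Reading the $i$-th coordinate of $B\bar{z} = \lambda \bar{z}$ gives
$$\lambda z_i = \sum_{j} b_{ij} z_j > 0,$$
where the strict inequality follows from $b_{ij} \geq 0$, $z_j > 0$, and at least one $b_{ij}$ strictly positive. Hence $\lambda > 0$, which contradicts the semi-stability of $B$. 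So $B$ is not algebraically positive, proving that $A$ does not require algebraic positivity.

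If instead row $i$ of $A$ is identically zero, then every $B \in Q(A)$ has zero $i$-th row, so $e_i$ is a non-negative left eigenvector of $B$ with a zero entry. Lemma \ref{lem2.2} then forces $B$ to be non-algebraically positive, and semi-stability is not even needed here. The column case runs identically, using a positive left eigenvector $\bar{y}$ of $B$ and inspecting the $j$-th coordinate of $\bar{y}^T B = \lambda \bar{y}^T$ to conclude $\lambda > 0$, contradicting semi-stability.

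I do not anticipate a serious technical obstacle. The crux is the Perron-Frobenius-flavored observation that pairing a non-negative row with a strictly positive right eigenvector forces the corresponding eigenvalue to be non-negative, which immediately clashes with semi-stability. The only mildly delicate point is separating off the degenerate sub-case where the sign-restricted row (or column) is identically zero, which is disposed of directly by Lemma \ref{lem2.2}.
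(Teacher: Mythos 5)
Your proof is correct and follows essentially the same route as the paper: it derives from Theorem \ref{thm1.1} that a semi-stable matrix in $Q(A)$ would have to possess a simple positive real eigenvalue, obtained by pairing the $-$-free row (or column) with the positive right (or left) eigenvector, which contradicts semi-stability. You merely spell out the eigenvalue-sign computation that the paper leaves implicit and dispose of the degenerate all-zero row case via Lemma \ref{lem2.2}, which is a harmless (indeed slightly more careful) elaboration rather than a different approach.
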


\begin{proof}
Suppose that $A$ requires algebraic positivity. Since $A$ has either a row containing no $-$ or a column containing no $-$, by Theorem \ref{thm1.1}, every matrix in $Q(A)$ has a simple positive eigenvalue. But it contradicts the assumption that $A$ is potentially semi-stable.
\end{proof}

\begin{prop}\label{prop2.8}
A real skew-symmetric matrix of even order cannot be algebraically positive.
\end{prop}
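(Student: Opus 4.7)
The plan is to apply Theorem \ref{thm1.1} and show that a real skew-symmetric matrix $A$ of even order $n=2m$ cannot have any simple real eigenvalue, which immediately rules out algebraic positivity regardless of what the eigenvectors look like.

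First I would recall the standard fact that every eigenvalue of a real skew-symmetric matrix is purely imaginary. The quickest route is to note that if $A\bar{x}=\lambda\bar{x}$ with $\bar{x}\neq\bar{0}$ (possibly complex), then $\lambda\,\bar{x}^*\bar{x}=\bar{x}^*A\bar{x}$ and taking conjugate transpose yields $\overline{\lambda}\,\bar{x}^*\bar{x}=\bar{x}^*A^T\bar{x}=-\bar{x}^*A\bar{x}=-\lambda\,\bar{x}^*\bar{x}$, whence $\lambda+\overline{\lambda}=0$. Consequently the only candidate for a real eigenvalue of $A$ is $0$, and if $0$ is not an eigenvalue then $A$ has no real eigenvalue at all, so by Theorem \ref{thm1.1} it is not algebraically positive.

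It remains to handle the case where $0$ is an eigenvalue, and here the goal is to show that its algebraic multiplicity is at least $2$ (so it cannot be simple). The clean way is to look at the characteristic polynomial $p(\lambda)=\det(\lambda I-A)$ and use $A^T=-A$ together with $n$ being even: $p(\lambda)=\det(\lambda I-A)=\det(\lambda I-A^T)=\det(\lambda I+A)=(-1)^n\det(-\lambda I-A)=p(-\lambda)$. Thus $p$ is an even polynomial, so the non-zero eigenvalues of $A$ occur in pairs $\pm\mu$ with equal multiplicities, making the total multiplicity of the non-zero eigenvalues even. Since $n$ is also even, the multiplicity of $0$ must be even as well, hence at least $2$.

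Combining the two cases, $A$ has no simple real eigenvalue, so Theorem \ref{thm1.1} gives that $A$ is not algebraically positive. I expect the only mildly delicate step to be the parity argument on the multiplicity of $0$; the symmetry $p(\lambda)=p(-\lambda)$ is what makes it work for even order (and indeed fails for odd order, where $0$ is automatically a simple candidate via $\det A=0$).
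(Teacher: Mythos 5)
Your proof is correct and follows essentially the same route as the paper: the spectrum of a real skew-symmetric matrix is purely imaginary, and for even order a parity count forces the eigenvalue $0$ (if present) to have even multiplicity, so no simple real eigenvalue exists and Theorem \ref{thm1.1} applies. The paper states this in three sentences; you have simply supplied the details (the $\bar{x}^*A\bar{x}$ computation and the evenness of the characteristic polynomial) that the paper leaves implicit.
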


\begin{proof}
Eigenvalues of a real skew-symmetric matrix are either 0 or purely imaginary. So an even order real skew-symmetric matrix cannot have a simple real eigenvalue. Therefore, by Theorem \ref{thm1.1}, a real skew-symmetric matrix of even order cannot be algebraically positive.
\end{proof}

\section{Sign patterns of cycle graphs that require algebraic positivity}\label{sec3}
We shall focus on the sign patterns whose graphs are cycles. If $G(A)$ is a cycle for some sign pattern matrix $A$ of order $n$, we can assume that the cycle is given by the sequence $(1,2,3,\ldots,n,1)$ because of Lemma \ref{lem2.6}.3. So we consider $A$ to be of the form
\begin{equation}\label{cycle}
A=\left[\begin{tabular}{C{15 mm}C{15 mm}C{15 mm}C{15 mm}C{15 mm}C{15 mm}C{15 mm}}
$a_{11}$ & $a_{12}$ & 0 & $\cdots$ & $\cdots$ & 0 & $a_{1n}$ \\[10 mm]
$a_{21}$ & $a_{22}$ & $a_{23}$ & 0 & $\cdots$ & $\cdots$ & 0 \\[10 mm]
0 & $\ddots$ & $\ddots$ & $\ddots$ & $\ddots$ && $\vdots$ \\[10 mm]
$\vdots$ & $\ddots$ & $\ddots$ & $\ddots$ & $\ddots$ & $\ddots$ & $\vdots$ \\[10 mm]
$\vdots$ && $\ddots$ & $\ddots$ & $\ddots$ & $\ddots$ & 0 \\[10 mm]
0 & $\cdots$ & $\cdots$ & 0 & $a_{n-2,n-1}$ & $a_{n-1,n-1}$ & $a_{n-1,n}$ \\[10 mm]
$a_{n1}$ & 0 & $\cdots$ & $\cdots$ & 0 & $a_{n,n-1}$ & $a_{nn}$ 
\end{tabular}\right],
\end{equation}
where $a_{ij}\in\{+,-,0\}$ for all $i,j$, and either $a_{ij}\neq0$ or $a_{ji}\neq0$ for $j=i+1 \mod n$.

\begin{lemma}\label{lem3.1}
Let $A$ be a sign pattern matrix of the form \eqref{cycle}. If $A$ requires algebraic positivity, then we have the following:
\begin{enumerate}
\item There do not exist $i,j$ such that $a_{i,i+1}=0$ and $a_{j+1,j}=0$.
\item There do not exist $i,j$ such that $a_{i,i+1}=-_0$, $a_{i+1,i}=+_0$ and $a_{j+1,j}=-_0$, $a_{j,j+1}=+_0$.
\end{enumerate}
\end{lemma}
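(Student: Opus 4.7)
My plan is to invoke Lemma~\ref{lem2.5}: if $A$ requires algebraic positivity, then $A$ is AP-irreducible, so both $D(A)$ and $D(B_A)$ must be strongly connected. I will show in each part that the stated configuration breaks strong connectivity of $D(A)$ (for part 1) or $D(B_A)$ (for part 2), so by the contrapositive $A$ cannot require algebraic positivity. The key structural observation is that, since $G(A)$ is a cycle on $\{1,\ldots,n\}$, the only off-diagonal arcs of $D(A)$---and, as I will check, of $D(B_A)$---lie between consecutive vertices on the cycle. Hence any directed walk from $i$ to $i+1$ must take either the direct arc $i\to i+1$ or the unique ``long way round'' $i\to i-1\to\cdots\to i+1$.

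For part (1), suppose such $i,j$ exist. If $i=j$, the hypothesis forces $a_{i,i+1}=a_{i+1,i}=0$, contradicting that $\{i,i+1\}$ is an edge of $G(A)$. If $i\neq j$, the absence of the arc $i\to i+1$ in $D(A)$ forces every $i$-to-$(i+1)$ directed walk to take the long way round, which must traverse the arc $j+1\to j$; but that arc is missing because $a_{j+1,j}=0$. Hence $D(A)$ is not strongly connected, which contradicts AP-irreducibility.

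For part (2), I first read off the relevant off-diagonal entries of $B_A$. From $B_A=A_+-A_-^T$, an off-diagonal entry $(B_A)_{p,q}$ is nonzero iff $a_{p,q}=+$ or $a_{q,p}=-$. The hypothesis $a_{i,i+1}\in\{-,0\}$ and $a_{i+1,i}\in\{+,0\}$ rules out both possibilities for $(p,q)=(i,i+1)$, so $(B_A)_{i,i+1}=0$. Symmetrically, $(B_A)_{j+1,j}=0$. Since the off-diagonal nonzeros of $B_A$ are confined to the same cycle edges as those of $A$, I can re-run the part~(1) argument on $D(B_A)$ to conclude that $D(B_A)$ is not strongly connected, again contradicting AP-irreducibility.

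The argument is entirely graph-theoretic and I do not anticipate a serious obstacle. The one spot that needs care is the degenerate case $i=j$, which in each part collapses both orientations of a single cycle edge to zero and has to be separately flagged as incompatible with $G(A)$ being a cycle; after that, the whole proof is just the standard observation that a unicyclic digraph with one ``missing'' arc in each direction cannot be strongly connected.
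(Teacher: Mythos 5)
Your proposal is correct and follows essentially the same route as the paper: both reduce to AP-irreducibility via Lemma~\ref{lem2.5} and show the hypothesized zero patterns destroy strong connectivity of $D(A)$ (part 1) and of $D(B_A)$ (part 2), using that all off-diagonal arcs lie on the cycle edges. The only cosmetic differences are that you exhibit unreachability of $i+1$ from $i$ instead of $j$ from $i$, and you spell out the degenerate case $i=j$ and the computation of the relevant entries of $B_A$, which the paper handles more tersely.
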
 

\begin{proof} 
Since $A$ requires algebraic positivity, by Lemma \ref{lem2.5}, $A$ is AP-irreducible. Let us say that $(\overrightarrow{i,j})$ is a forward arc if $j=i+1\mod n$, and $(\overrightarrow{j,i})$ is a backward arc if $j=i+1\mod n$.
\begin{enumerate}
\item Suppose that there exist $i,j$ such that $a_{i,i+1}=0$ and $a_{j+1,j}=0$. Since $A$ is of the form \eqref{cycle}, $i\neq j$. Now any directed path in $D(A)$ to vertex $j$ can be traversed via forward arcs only, and any directed path in $D(A)$ from vertex $i$ can be traversed via backward arcs only. So there is no directed path from $i$ to $j$ in $D(A)$, and thus $A$ is reducible. So $A$ is not AP-irreducible, a contradiction.
\item Suppose that there exist $i,j$ such that $a_{i,i+1}=-_0$, $a_{i+1,i}=+_0$ and $a_{j+1,j}=-_0$, $a_{j,j+1}=+_0$. So $(B_A)_{i,i+1}=0$ and $(B_A)_{j+1,j}=0$. Now any directed path in $D(B_A)$ to vertex $j$ can be traversed via forward arcs only, and any directed path in $D(B_A)$ from vertex $i$ can be traversed via backward arcs only. So there is no directed path from $i$ to $j$ in $D(B_A)$, and thus $B_A$ is reducible. So $A$ is not AP-irreducible, a contradiction.
\end{enumerate}
\end{proof}

The following result shows that for a sign pattern matrix $A$ with cycle graph to require algebraic positivity, we must have a directed $n$-cycle in $D(A)$ either with positive arcs only or with negative arcs only.

\begin{lemma}\label{lem3.2}
Let $A$ be a sign pattern matrix of order $n$ of the form \eqref{cycle}. If $A$ requires algebraic positivity, then either $D(A_+)$ or $D(A_-)$ contains a directed cycle of length $n$. 
\end{lemma}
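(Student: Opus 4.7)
The plan is to argue by contradiction, exhibiting some $B \in Q(A)$ that is not algebraically positive. By Lemma \ref{lem3.1}.1, either every forward arc $a_{i,i+1}$ or every backward arc $a_{i+1,i}$ (indices mod $n$) is nonzero; after applying a reflection permutation (which swaps the forward and backward arc positions) and using Lemma \ref{lem2.6}.3, I may assume that every forward arc is nonzero. The goal is then to show that the forward arcs must all share a common sign, for this immediately yields a directed $n$-cycle in $D(A_+)$ or in $D(A_-)$.

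First I would suppose, toward a contradiction, that some $a_{i,i+1}=+$ while some $a_{j,j+1}=-$. For a parameter $\epsilon>0$ (to be taken small), I would pick $B\in Q(A)$ with
\[
b_{i,i+1}=\sgn(a_{i,i+1})\in\{\pm1\},\qquad b_{i+1,i}=\epsilon\,\sgn(a_{i+1,i}),\qquad b_{ii}=\epsilon\,\sgn(a_{ii}),
\]
(with the convention $\sgn(0)=0$) and all other entries $0$; this indeed lies in $Q(A)$. As $\epsilon\to 0^{+}$, $B$ converges to the ``signed cyclic shift'' $B_0$ whose only nonzero entries are the unit forward arcs. A direct calculation shows $B_0^n=dI$, where $d=\prod_{i=1}^n b_{i,i+1}\in\{\pm1\}$, so the eigenvalues of $B_0$ are the $n$ distinct $n$-th roots of $d$; in particular all simple.

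Next I would analyze when $B_0$ admits a positive right eigenvector. The eigenvector recursion $b_{i,i+1}z_{i+1}=\mu z_i$, normalized at $z_1>0$, yields by induction
\[
\sgn(z_{k+1})=\sgn(\mu)^{k}\big/\sgn(b_{1,2}b_{2,3}\cdots b_{k,k+1}).
\]
For every $z_k$ to be positive one must have $\sgn(b_{i,i+1})=\sgn(\mu)$ for every $i$, that is, all forward arcs sharing a common sign---contradicting the standing assumption. Hence no real eigenvalue of $B_0$ admits a positive right eigenvector, and in the case $n$ even with $d=-1$, $B_0$ has no real eigenvalue at all; in particular $B_0$ is not algebraically positive.

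Finally, a continuity step transfers this conclusion from $B_0$ to $B$ for small $\epsilon$. Because the eigenvalues of $B_0$ are simple and mutually separated (and at positive distance from $\mathbb{R}$ whenever non-real), for all sufficiently small $\epsilon>0$ the eigenvalues of the real matrix $B$ stay simple and close to those of $B_0$: real eigenvalues of $B_0$ deform to real eigenvalues of $B$, non-real ones remain non-real, and the normalized eigenvectors depend continuously on $\epsilon$. A strictly negative coordinate of a real $B_0$-eigenvector therefore persists for $B$, so $B$ has no positive right eigenvector, and by Theorem \ref{thm1.1}, $B$ is not algebraically positive---contradicting that $A$ requires algebraic positivity. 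I expect this last continuity step to be the main technical obstacle: verifying that the real/non-real dichotomy among the eigenvalues and the sign pattern of the corresponding eigenvectors are both stable under the $\epsilon$-deformation, which ultimately rests on the simplicity of the spectrum of $B_0$.
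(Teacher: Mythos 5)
Your proof is correct, but it takes a genuinely different route from the paper's. The paper argues combinatorially: assuming neither $D(A_+)$ nor $D(A_-)$ contains a directed $n$-cycle, it repeatedly applies Corollary \ref{cor2.4} (together with Lemma \ref{lem2.5}, which supplies a $+$ in every row and column) to force $a_{ij}=a_{ji}$, to forbid two consecutive arcs with product $+$, and to annihilate the diagonal; this pins the pattern down to an alternating even cycle, for which an explicit $B\in Q(A)$ has a nonnegative eigenvector with zero entries, contradicting Lemma \ref{lem2.2}. You instead reduce, via Lemma \ref{lem3.1}.1 and a reflection permutation justified by Lemma \ref{lem2.6}.3, to the case where all forward arcs are nonzero, and exclude mixed signs among them analytically: your $B=B_0+O(\epsilon)$ lies in $Q(A)$, the signed cyclic shift $B_0$ has characteristic polynomial $x^n-d$, hence $n$ simple, well-separated eigenvalues, its real eigenvectors have all coordinates nonzero with mixed signs, and these features persist for small $\epsilon$, so no real eigenvalue of $B$ has a positive right eigenvector and Theorem \ref{thm1.1} gives the contradiction. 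The perturbation step you flag is indeed the only delicate point, but it is standard and you identify exactly what makes it work: simplicity of the spectrum of $B_0$, the conjugate-pair argument keeping the perturbed real eigenvalues real, and the fact that the limiting eigenvector has no zero coordinate --- note only that you should say that both a strictly positive and a strictly negative entry persist, so that no scalar multiple of the perturbed eigenvector can be positive. What the paper's route buys is a proof entirely within the sign-pattern toolkit (Corollary \ref{cor2.4}, Lemma \ref{lem2.2}) with explicit structural byproducts; what yours buys is brevity and far less casework, at the price of importing spectral perturbation theory.
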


\begin{proof}
Since $A$ requires algebraic positivity, due to Lemma \ref{lem2.5}, we can assume that every row and column of $A$ contains a $+$. 

Suppose that neither $D(A_+)$ nor $D(A_-)$ contains a directed cycle of length $n$. We first prove the following three statements successively.
\begin{enumerate}
\item $a_{ij}=a_{ji}$ for all $i,j$. 
\item There exist no three distinct vertices $i,j,k$ such that $a_{ij}a_{jk}=+$.
\item $a_{ii}=0$ for all $i$.
\end{enumerate}
Our goal is to arrive at a contradiction with the help of these three statements. Since $A$ is of the form \eqref{cycle}, and neither $D(A_+)$ nor $D(A_-)$ contains a directed cycle of length $n$, we can assume that there exist $i,j\in\{1,2,\ldots,n-1\}$ such that $a_{i,i+1}=+$ and $a_{j,j+1}=-$. 
\begin{enumerate}
\item Suppose that $a_{i+1,i}=-_0$. Because of Lemma \ref{lem2.6}.3, we can consider that $a_{12}=+$, and $a_{k+1,k}=-_0$ for some $k<t=\min\{i:a_{i,i+1}=-_0\}$. Then $a_{k,k+1}=+$, and thus by Lemma \ref{lem3.1}.2, $a_{t+1,t}=-$. 

Since every column of $A$ contains a $+$, and $a_{k+1,k}=-_0$ for some $k<t=\min\{i:a_{i,i+1}=-_0\}$, the $t$-th column of $A(t,:)$ has both $+$ and $-$, and all other columns of $A(t,:)$ has a $+$. So by Corollary \ref{cor2.4}, $A$ does not require algebraic positivity, a contradiction. Thus $a_{i+1,i}=+$.

Therefore by Lemma \ref{lem2.6}, we can conclude that $a_{ij}=+$ implies $a_{ji}=+$, and $a_{ij}=-$ implies $a_{ji}=-$. 
\item Since $A$ has the form \eqref{cycle}, $a_{ij}\neq0$ whenever $|i-j|=1$. Suppose that there exist three distinct vertices $i,j,k$ such that $a_{ij}a_{jk}=+$. Because of Lemma \ref{lem2.6}, we can assume that $a_{12},a_{23}=+$ and $a_{34}=-$. So $a_{43}=-$.

Since every column of $A$ contains a $+$, the third column of $A(3,:)$ has a $+$ and a $-$, and all other columns has a $+$. So by Corollary \ref{cor2.4}, $A$ does not require algebraic positivity, a contradiction. So there exist no three distinct vertices $i,j,k$ such that $a_{ij}a_{jk}=+$. Consequently, $n$ is even.
\item We can assume that $a_{12}=+$ and $a_{23}=-$. Then $a_{21}=+$ and $a_{32}=-$.
\begin{enumerate}
\item If $a_{11}=+$, then the second column of $A(2,:)$ has both $+$ and $-$, and all other columns has a $+$, because every column of $A$ contains a $+$. So by Corollary \ref{cor2.4}, $A$ does not require algebraic positivity, a contradiction.
\item If $a_{22}=+$, then the first column of $A(1,:)$ has both $+$ and $-$, and all other columns has a $+$, because every column of $A$ contains a $+$. So by Corollary \ref{cor2.4}, $A$ does not require algebraic positivity, a contradiction. 
\end{enumerate}
Thus $a_{ii}\neq+$ for all $i$. Again by Lemma \ref{lem2.6}, we have $a_{ii}\neq-$ for all $i$. Therefore $a_{ii}=0$ for all $i$.
\end{enumerate}
Using the above three statements, we can assume that $n$ is even, $a_{1n}=a_{n1}=-$, $a_{i,i+1}=a_{i+1,i}=+$ for all odd $i$, and $a_{i,i+1}=a_{i+1,i}=-$ for all even $i$. Consider $B\in Q(A)$ by replacing $+$ with 1 and $-$ with $-1$. Then $B\bar{x}=0$, where 
$$x_i=\begin{cases}
1, & \mbox{if } i \mbox{ is odd}; \\
0, & \mbox{if } i \mbox{ is even}.
\end{cases}$$ 
So by Lemma \ref{lem2.2}, $B$ is not algebraically positive. Therefore $A$ does not require algebraic positivity, a contradiction. Hence either $D(A_+)$ or $D(A_-)$ contains a directed cycle of length $n$.
\end{proof}

The following two results successively shows that for a sign pattern matrix $A$ to require algebraic positivity, no vertex is covered by both positive and negative directed 2-cycles in $D(A)$, and $a_{ii}=-_0$ if vertex $i$ is covered by a negative directed 2-cycle in $D(A)$.

\begin{lemma}\label{lem3.3}
Let $A$ be a sign pattern matrix of order $n$ such that it has the form \eqref{cycle}. If $D(A_+)$ contains a directed cycle of length $n$, and $A$ requires algebraic positivity, then no vertex is covered by both positive and negative directed 2-cycles in $D(A)$.
\end{lemma}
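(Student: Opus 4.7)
The plan is to normalize the pattern using Lemma \ref{lem2.6}.3 and then invoke Corollary \ref{cor2.4} (together with its row analogue, which follows from the row version of Proposition \ref{prop2.3} noted just before the corollary) to reach a contradiction. After a cyclic relabeling of vertices I may assume that the $n$-cycle contained in $D(A_+)$ is the forward one $(1,2,\ldots,n,1)$, so $a_{i,i+1}=+$ for $i=1,\ldots,n-1$ and $a_{n,1}=+$ (indices mod $n$). Because $G(A)$ is the $n$-cycle, each vertex lies in at most two directed 2-cycles, and the 2-cycle between $k$ and $k+1$ has sign $\sign(a_{k+1,k})$, since $a_{k,k+1}=+$.

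Next I would argue by contradiction: suppose some vertex is covered by both a positive and a negative 2-cycle. Another cyclic relabeling lets me take this vertex to be $1$, so both 2-cycles $\{1,2\}$ and $\{n,1\}$ exist with opposite signs, and these signs are $\sign(a_{21})$ and $\sign(a_{1,n})$ respectively. Two subcases then arise: \emph{Case (A)} $a_{21}=+$, $a_{1,n}=-$, and \emph{Case (B)} $a_{21}=-$, $a_{1,n}=+$.

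In Case (A) I would apply Corollary \ref{cor2.4} with $t=n$: column $n$ of $A(n,:)$ contains both $a_{n-1,n}=+$ and $a_{1,n}=-$; column $1$ contains $a_{21}=+$; and for each $2\le j\le n-1$, column $j$ contains the forward-arc entry $a_{j-1,j}=+$ whose row $j-1\le n-2$ is preserved. In Case (B) I would apply the row analogue of Corollary \ref{cor2.4} with $t=2$: row $2$ of $A(:,2)$ contains both $a_{21}=-$ and $a_{23}=+$; row $1$ contains $a_{1,n}=+$; row $n$ contains $a_{n,1}=+$; and for each $3\le i\le n-1$, row $i$ contains the forward-arc entry $a_{i,i+1}=+$ whose column $i+1\ge 4$ is preserved. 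In both subcases this contradicts the assumption that $A$ requires algebraic positivity.

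The delicate point, and the main obstacle to locate, is the alignment between the vertex where the sign change occurs and the row/column to delete: the only forward-cycle $+$ that is removed by the deletion must sit in the same column (respectively row) that already contains the $-$ coming from the opposite 2-cycle, so that Corollary \ref{cor2.4} (respectively its row analogue) applies. Case (B) is not strictly necessary if one is willing to pass from $A$ to a suitable $P^T A^T P$ via Lemma \ref{lem2.6}.3 in order to reduce to Case (A), but the direct row-analogue argument is cleaner and symmetric.
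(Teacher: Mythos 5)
Your proposal is correct and follows essentially the same route as the paper: after normalizing the forward $n$-cycle to all $+$ via Lemma \ref{lem2.6}.3, both arguments split into the two sign configurations at the doubly covered vertex and apply Corollary \ref{cor2.4} (respectively its row analogue) to a suitably deleted row or column. Your choices $t=n$ and $t=2$ coincide, up to a cyclic relabeling, with the paper's $t=1$ (column version) and $t=2$ (row version), so the verification details match the published proof.
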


\begin{proof}
Let us assume that $A$ requires algebraic positivity. Since $D(A_+)$ contains a directed cycle of length $n$, assume that $a_{ij}=+$ for all $i,j$ with $j=i+1\mod n$. Suppose that we have some vertex covered by both positive and negative directed 2-cycles. Then we can assume that either $a_{1n}=+$ and $a_{21}=-$ or $a_{21}=-$ and $a_{32}=+$, because of Lemma \ref{lem2.6}. 
\begin{enumerate}
\item Assume that $a_{1n}=+$ and $a_{21}=-$. Now the second row of $A(:,2)$ has both $+$ and $-$, and all other rows have a $+$. So by Corollary \ref{cor2.4}, $A$ does not require algebraic positivity, a contradiction.
\item Assume that $a_{21}=-$ and $a_{32}=+$. Now the first column of $A(1,:)$ has both $+$ and $-$, and all other columns have a $+$. So by Corollary \ref{cor2.4}, $A$ does not require algebraic positivity, a contradiction.
\end{enumerate}
\end{proof}

\begin{lemma}\label{lem3.4}
Let $A$ be a sign pattern matrix of order $n$ having the form \eqref{cycle}, and $D(A_+)$ contains a directed cycle of length $n$. If $a_{ij}a_{ji}=-$ for some $i\neq j$ with $a_{ii}=+$, then $A$ does not require algebraic positivity.
\end{lemma}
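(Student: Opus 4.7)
My plan is to reduce to a canonical form via Lemma~\ref{lem2.6}.3 and then invoke Corollary~\ref{cor2.4} (in its column form, or in the row analogue used in the proof of Lemma~\ref{lem3.3}). Because $G(A)$ is the cycle $(1,2,\ldots,n,1)$ and $D(A_+)$ contains a directed $n$-cycle, every forward arc must be positive: $a_{i,i+1}=+$ for all $i$ (indices modulo $n$). The only nonzero off-diagonal entries are $a_{k,k+1}$ and $a_{k+1,k}$, so the hypothesis $a_{ij}a_{ji}=-$ forces $\{i,j\}$ to be a cycle edge, with the negative sign lying on the backward arc. I would use Lemma~\ref{lem2.6}.3 to cyclically relabel so that the vertex with $a_{ii}=+$ becomes vertex $1$; the adjacent negative edge is then either $\{1,2\}$ or $\{1,n\}$, which splits the argument into two cases.

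\emph{Case 1: $a_{11}=a_{12}=+$ and $a_{21}=-$.} I would apply the row form of Corollary~\ref{cor2.4} with $t=2$. Row $2$ of $A(:,2)$ is $[a_{21},a_{23},0,\ldots,0]=[-,+,0,\ldots,0]$ (using $a_{23}=+$ from the positive $n$-cycle), which contains both $+$ and $-$. Every other row of $A(:,2)$ still contains a $+$: row $1$ keeps $a_{11}=+$, row $n$ keeps $a_{n1}=+$, and for each $3\le i\le n-1$ the forward entry $a_{i,i+1}=+$ sits in column $i+1\ne 2$ and therefore survives. Corollary~\ref{cor2.4} then gives that $A$ does not require algebraic positivity.

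\emph{Case 2: $a_{11}=a_{n1}=+$ and $a_{1n}=-$.} I would apply Corollary~\ref{cor2.4} directly with $t=n$. Column $n$ of $A(n,:)$ is $[a_{1n},0,\ldots,0,a_{n-1,n}]^T=[-,0,\ldots,0,+]^T$ (using $a_{n-1,n}=+$), which contains both $+$ and $-$. Every other column of $A(n,:)$ still contains a $+$: column $1$ keeps $a_{11}=+$, and for each $2\le j\le n-1$ the forward entry $a_{j-1,j}=+$ sits in row $j-1\le n-2$, which is not deleted. Corollary~\ref{cor2.4} again yields the conclusion.

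No new technique beyond Lemma~\ref{lem2.6}.3 and Corollary~\ref{cor2.4} is required; the main subtlety will be handling the case split according to which neighbour of vertex $1$ is joined by the negative backward arc, together with careful index-bookkeeping at the boundary rows/columns (namely row $n$ in Case~1 and column $n-1$ in Case~2), where one must verify that the surviving forward arc indeed supplies a $+$ after the relevant row or column deletion.
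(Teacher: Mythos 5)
Your proposal is correct and follows essentially the same route as the paper: normalize via Lemma~\ref{lem2.6}.3 so the positive directed $n$-cycle uses the forward arcs, then apply Corollary~\ref{cor2.4} (in column or row form) to the column/row meeting the negative backward arc, using the positive loop to supply the $+$ that the deleted row or column would otherwise have provided. Your explicit two-case split (loop at the head versus the tail of the negative arc) is in fact slightly more careful than the paper, which compresses both configurations into the single normalized case $a_{21}=-$, $a_{22}=+$ by appealing to Lemma~\ref{lem2.6}.
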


\begin{proof}
Since $D(A_+)$ contains a directed cycle of length $n$, assume that $a_{ij}=+$ for all $i,j$ with $j=i+1\mod n$. Due to Lemma \ref{lem2.6}, we can consider that $a_{21}=-$ and $a_{22}=+$. Now the first column of $A(1,:)$ has both $+$ and $-$, and all other columns has a $+$. So by Corollary \ref{cor2.4}, $A$ does not require algebraic positivity.
\end{proof}

A square sign pattern matrix $A$ is said to be sign non-singular (SNS) if every matrix in $Q(A)$ is non-singular. For a square matrix $A$ of order $n$, the standard determinant expansion of $A$ is given by
\begin{equation}
\det A=\sum_{\sigma\in S_n} \sgn(\sigma) a_{1\sigma(1)}a_{2\sigma(2)}\cdots a_{n\sigma(n)},
\end{equation}
where the sum is taken over the set $S_n$ of all permutations of the set $\{1,2,\ldots,n\}$, and $\sgn(\sigma)$ is the sign of the permutation $\sigma$.

\begin{lemma}[\cite{BS95}]\label{lem3.5}
Let $A=[a_{ij}]$ be a matrix of order $n$. Then the following are equivalent:
\begin{enumerate}
\item $A$ is an SNS matrix.
\item There is a nonzero term in the standard determinant expansion of $A$ and every nonzero term has the same sign.
\end{enumerate}
\end{lemma}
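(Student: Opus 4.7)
The plan is to prove the two implications separately, where one direction is immediate and the other requires a concrete construction.

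For (2) $\Rightarrow$ (1), I would observe that if every nonzero term in the standard determinant expansion has the same sign (say $+$) and at least one such term exists, then for any $B \in Q(A)$ the signed product $\sgn(\sigma) \prod_i b_{i,\sigma(i)}$ is non-negative for every $\sigma$, and strictly positive for at least one $\sigma$. Summing yields $\det(B) > 0$, so $B$ is non-singular, showing $A$ is SNS.

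For (1) $\Rightarrow$ (2), I would split into two sub-claims. First, if every term vanishes (that is, for every $\sigma$ some entry $a_{i,\sigma(i)} = 0$), then $\det(B) = 0$ identically on $Q(A)$, contradicting SNS. Hence at least one term is nonzero. The main content is showing that all nonzero terms must share the same sign. Assume for contradiction that two permutations $\sigma_1, \sigma_2$ produce nonzero terms of opposite sign. I would then construct, for a large parameter $M > 0$, two matrices $B_1, B_2 \in Q(A)$ as follows: $B_1$ has magnitude $M$ at each position $(i,\sigma_1(i))$ and magnitude $1$ elsewhere (with signs prescribed by $A$), and $B_2$ is defined analogously using $\sigma_2$. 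In the determinant expansion of $B_1$, the only monomial of degree $n$ in $M$ arises from $\sigma = \sigma_1$, since any other permutation must hit at least one unscaled entry; the corresponding leading coefficient equals the $\sigma_1$ term in the sign-pattern expansion. All remaining contributions are $O(M^{n-1})$, so $\sgn \det(B_1)$ matches the sign of the $\sigma_1$ term for $M$ large, and similarly for $B_2$. By assumption, $\det(B_1)$ and $\det(B_2)$ then have opposite signs.

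To conclude, I would invoke continuity. The straight-line segment $B(t) = (1-t)B_1 + tB_2$ for $t \in [0,1]$ lies entirely in $Q(A)$ because $Q(A)$ is convex: at each position the sign is preserved under convex combinations of sign-compatible entries. Since $t \mapsto \det(B(t))$ is continuous and changes sign between the endpoints, the intermediate value theorem produces $t^* \in (0,1)$ with $\det(B(t^*)) = 0$, giving a singular matrix in $Q(A)$ and contradicting SNS. The main subtlety lies in verifying the leading-coefficient calculation in the $B_1$-construction — specifically that $\sigma_1$ is the unique permutation contributing to the $M^n$ term — which rests on the observation that selecting a full system of $n$ nonzero entries from the scaled positions forces the permutation to coincide with $\sigma_1$. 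Everything else is routine continuity and sign bookkeeping.
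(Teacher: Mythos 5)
Your proof is correct. The paper does not prove this lemma itself but cites it from Brualdi--Shader \cite{BS95}, and your argument is essentially the standard one from that source: the easy direction by termwise sign agreement forcing $\det(B)\neq 0$, and the converse by scaling the entries along two permutations with oppositely signed terms to make determinants of both signs appear, then using convexity (hence connectedness) of $Q(A)$ and the intermediate value theorem to produce a singular matrix in $Q(A)$. The key technical points — that $\sigma_1$ is the unique permutation contributing an $M^n$ term, and that the IVT step is genuinely needed because SNS is defined only by non-singularity rather than by constancy of the determinant's sign — are both handled correctly.
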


Given an SNS matrix $A$ with negative main diagonal, Theorem 3.2.5 in \cite{BS95} provides a necessary and sufficient condition for a non-diagonal entry of $B^{-1}$ to have the same sign for all $B\in Q(A)$. We generalize this result as follows.

\begin{theorem}\label{thm3.6}
Let $A=[a_{ij}]$ be an SNS matrix of order $n$, and let $r$ and $s$ be distinct integers with $1\leq r,s\leq n$. The entries in the $(s,r)$-position of matrices in $\{\adj(B):B\in Q(A)\}$ are of the same sign if and only if for every directed path $\alpha$ from $s$ to $r$ in the signed digraph $D(A)$, the sign $(-)^l\sign(\alpha)\cdot\sign(\beta)$ is the same for all nonzero term $\beta$ in the standard determinant expansion of the matrix $A(\alpha)$ obtained from $A$ by deleting rows and columns corresponding to the vertices covered by $\alpha$, where $l$ is the length of $\alpha$.
\end{theorem}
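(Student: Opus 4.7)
The plan is to derive a signed-path expansion of $(\adj B)_{sr}$ and then invoke a standard rigidity fact for polynomials in sign-constrained variables. Starting from the cofactor formula $(\adj B)_{sr} = (-1)^{s+r}\det B[r\mid s]$, where $B[r\mid s]$ is the submatrix obtained by deleting row $r$ and column $s$, I repackage this as a signed sum over permutations $\pi\in S_n$ with $\pi(r)=s$:
\begin{equation*}
(\adj B)_{sr} \;=\; \sum_{\pi \in S_n,\ \pi(r)=s}\sgn(\pi)\prod_{i\neq r} b_{i,\pi(i)}.
\end{equation*}
Every such $\pi$ has a unique cycle containing $r$; deleting the arc $r\to s$ from this cycle leaves a sequence $s\to c_1\to\cdots\to c_{l-1}\to r$, which is a directed path $\alpha$ of length $l$ from $s$ to $r$ in $D(A)$ precisely when each entry $b_{c_{t-1},c_t}$ is nonzero (otherwise the summand is $0$). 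The cycle itself has length $l+1$ and contributes sign $(-1)^l$, while the remaining cycles of $\pi$ form a permutation $\sigma$ of $\{1,\ldots,n\}\setminus V(\alpha)$ with $\sgn(\pi)=(-1)^l\sgn(\sigma)$. Splitting the product yields
\begin{equation*}
(\adj B)_{sr} \;=\; \sum_{\alpha\colon s\to r\ \text{in}\ D(A)}(-1)^l\,w(\alpha)\,\det B(\alpha),
\end{equation*}
where $w(\alpha)=b_{s,c_1}b_{c_1,c_2}\cdots b_{c_{l-1},r}$ is the path weight.

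Expanding each $\det B(\alpha)$ as a sum over permutations $\sigma$ of $\{1,\ldots,n\}\setminus V(\alpha)$ resolves $(\adj B)_{sr}$ into elementary summands indexed by pairs $(\alpha,\sigma)$. For any $B\in Q(A)$, the numerical sign of the summand attached to $(\alpha,\sigma)$ equals $(-1)^l\sign(\alpha)\sign(\beta)$, where $\beta$ is the term of the standard determinant expansion of $A(\alpha)$ corresponding to $\sigma$. The crucial observation is that distinct pairs $(\alpha,\sigma)$ reassemble into distinct permutations $\pi$ with $\pi(r)=s$, and therefore produce pairwise distinct monomials in the variables $b_{ij}$; no monomial-level cancellation occurs across different pairs.

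I would then invoke the elementary rigidity fact that a real polynomial in sign-constrained variables takes a single value from $\{+,-,0\}$ throughout $Q(A)$ if and only if all of its nonzero monomial contributions agree in sign, the nontrivial direction following by rescaling individual monomials to dominate all others. Applied to the expansion above, the $(s,r)$-entry of $\adj B$ has a fixed sign on $Q(A)$ if and only if all nonzero quantities $(-1)^l\sign(\alpha)\sign(\beta)$ coincide, which is exactly the condition in the theorem. The degenerate cases---no directed path from $s$ to $r$ exists, or every $A(\alpha)$ has no nonzero standard expansion term---make the sum vanish identically, and the condition holds vacuously.

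The main obstacle I anticipate is the careful sign bookkeeping in the first paragraph: confirming that the cycle through $r$ has length $l+1$ (hence contributes $(-1)^l$, where $l$ is the number of arcs of $\alpha$), and reconciling the prefactor $(-1)^{s+r}$ of the adjugate with the sign of the bijection $\{1,\ldots,n\}\setminus\{r\}\to\{1,\ldots,n\}\setminus\{s\}$ implicit in $\det B[r\mid s]$. Once the path expansion is in place, the rest is a direct application of monomial dominance, in close analogy with the equivalence in Lemma \ref{lem3.5}.
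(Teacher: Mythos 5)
Your proposal is correct and takes essentially the same route as the paper: both expand the $(s,r)$-entry of $\adj(B)$ as a signed sum of terms $(-1)^l\,(\text{path weight of }\alpha)\cdot\beta$ over directed paths $\alpha$ from $s$ to $r$ and nonzero terms $\beta$ of $\det B(\alpha)$, and both conclude by the ``all nonzero terms of one sign'' criterion. The only difference is in packaging: the paper obtains the expansion via $\det B(s\leftarrow\bar{e}_r)$ and appeals to Lemma \ref{lem3.5}, whereas you decompose the permutations with $\pi(r)=s$ directly, verify there is no monomial cancellation, and prove the dominance step yourself (incidentally never needing the SNS hypothesis), which is sound.
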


\begin{proof}
Given a matrix $B\in Q(A)$, the entry in the $(s,r)$-position of $\adj(B)$ is $(-1)^{r+s}$ times the determinant of the matrix obtained from $B$ by deleting the $r$-th row and the $s$-th column. Let $\bar{e}_r$ be the $n\times 1$ column vector having only one nonzero entry 1 at position $r$. So the entry in the $(s,r)$-position of $\adj(B)$ is $\det B(s\leftarrow \bar{e}_r)$, where the matrix $\tilde{B}=B(s\leftarrow \bar{e}_r)$ obtained from $B$ by replacing $s$-th column of $B$ with $\bar{e}_r$. Then $\tilde{b}_{rs}=1$. Every nonzero term in $\det B(s\leftarrow \bar{e}_r)$ can be expressed as 
$$(-1)^l(\tilde{b}_{rs}b_{si_1}b_{i_1i_2}\cdots b_{i_kr})\beta=(-1)^l(b_{si_1}b_{i_1i_2}\cdots b_{i_kr})\beta,$$
where $\alpha=(s,i_1,i_2,\ldots,i_k,r)$ is a directed path in $D(A)$, $l$ is the length of $\alpha$, and $\beta$ is a nonzero term in the determinant expansion of $B(\alpha)$ obtained from $B$ by deleting rows and columns corresponding to the vertices covered by $\alpha$. Since $A$ is SNS, $A(\alpha)$ is SNS provided that $\sign(\alpha)\neq0$. Therefore the result follows from Lemma \ref{lem3.5}.
\end{proof}

The following three lemmas from \cite{DB19,KQZ16} provide some sufficient conditions for any sign pattern matrix to require algebraic positivity.

\begin{lemma}[\cite{KQZ16}]\label{lem3.7}
If $A$ is a real matrix and there is a positive integer $k$ such that $A^k$ is algebraically positive, then $A$ is algebraically positive.
\end{lemma}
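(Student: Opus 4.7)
The plan is to use the definition of algebraic positivity directly and bypass Theorem \ref{thm1.1} entirely via a polynomial-composition trick. By hypothesis, $A^k$ is algebraically positive, so there is a real polynomial $g$ with $g(A^k)$ positive. Define $f(x) = g(x^k) \in \mathbb{R}[x]$; this is again a polynomial in a single variable. Substituting $A$ for $x$ yields $f(A) = g(A^k)$, which is positive by assumption. Hence there exists a real polynomial, namely $f$, with $f(A)$ positive, so $A$ is algebraically positive by definition.

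An alternative route would go through Theorem \ref{thm1.1}. By hypothesis, $A^k$ has a simple real eigenvalue $\mu$ with positive right eigenvector $\bar{z}$ and positive left eigenvector $\bar{y}$. Since the eigenvalues of $A^k$ are the $k$-th powers of the eigenvalues of $A$, one argues that there is a \emph{unique} eigenvalue $\lambda$ of $A$ with $\lambda^k = \mu$, and that $\lambda$ is real: if $\lambda$ were non-real complex, then $\overline{\lambda}$ would be another eigenvalue of $A$ with $\overline{\lambda}^k = \overline{\mu} = \mu$, forcing $\mu$ to have algebraic multiplicity at least $2$ in $A^k$, contrary to simplicity. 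The same multiplicity count shows $\lambda$ is simple in $A$. Any eigenvector of $A$ for $\lambda$ is also an eigenvector of $A^k$ for $\mu$, hence a scalar multiple of $\bar{z}$, and after rescaling by the sign of that scalar it is positive; the argument for the left eigenvector is identical. Then Theorem \ref{thm1.1} delivers the conclusion.

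The main obstacle is essentially nil: the polynomial-composition argument is a one-liner once one notices that $g(x^k)$ is itself a polynomial in $x$. The only content to record carefully is that substituting $A$ into $f$ and substituting $A^k$ into $g$ produce the same matrix, which follows from associativity of matrix multiplication. The spectral alternative requires slightly more bookkeeping to rule out multiple preimages of $\mu$ under $\lambda \mapsto \lambda^k$ and to match up eigenvectors, so I would record the polynomial-composition proof as the primary argument and mention the spectral picture only as intuition.
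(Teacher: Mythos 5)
Your polynomial-composition argument is correct and complete: writing $g(y)=\sum_j c_j y^j$, you get $f(A)=\sum_j c_j A^{kj}=g(A^k)$, which is entrywise positive, where $f(x)=g(x^k)$ is a genuine real polynomial, and this is precisely the definition of algebraic positivity for $A$. The paper does not prove this lemma itself---it is quoted from \cite{KQZ16}---and your one-line composition proof is the standard argument for it, so there is nothing to reconcile; the spectral route via Theorem \ref{thm1.1} that you sketch is also workable but unnecessary.
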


\begin{lemma}[\cite{DB19}]\label{lem3.8}
If $\adj(A)$ is algebraically positive for some real square matrix $A$, then $A$ is algebraically positive.
\end{lemma}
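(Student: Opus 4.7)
\medskip
\noindent\textbf{Proof plan.} My plan is to apply Theorem~\ref{thm1.1} to recast the hypothesis as a spectral statement for $\adj(A)$ and then to transfer that statement to $A$ using the fundamental identity $A\cdot\adj(A)=\adj(A)\cdot A=\det(A)\,I$. By Theorem~\ref{thm1.1}, $\adj(A)$ has a simple real eigenvalue $\mu$ together with positive right and left eigenvectors $\bar{x},\bar{y}$. Multiplying $\adj(A)\bar{x}=\mu\bar{x}$ by $A$ on the left and $\bar{y}^T\adj(A)=\mu\bar{y}^T$ by $A$ on the right and using the identity gives
\[
\mu\, A\bar{x} = \det(A)\,\bar{x}, \qquad \mu\,\bar{y}^T A = \det(A)\,\bar{y}^T.
\]

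When $\mu\neq 0$, set $\lambda:=\det(A)/\mu$; the displayed equations show that $\bar{x}$ and $\bar{y}$ are positive right and left eigenvectors of $A$ for the real eigenvalue $\lambda$, and it only remains to verify simplicity of $\lambda$ in order to invoke Theorem~\ref{thm1.1} for $A$. If $A$ is invertible the relation $\adj(A)=\det(A)\,A^{-1}$ matches the spectra of $A$ and $\adj(A)$ via $t\mapsto\det(A)/t$ with preserved algebraic multiplicities, so simplicity of $\mu$ forces simplicity of $\lambda$. If $A$ is singular then $\lambda=0$; since $\adj(A)\neq 0$ (it is algebraically positive) we must have $\rk(A)=n-1$, so $0$ has geometric multiplicity $1$ for $A$, and Lemma~\ref{lem2.1}.2 together with $\bar{y}^T\bar{x}>0$ upgrades this to algebraic multiplicity $1$.

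When $\mu=0$, the displayed equations force $\det(A)=0$; combined with the standard fact that $\rk(\adj(A))\in\{0,1\}$ for singular $A$, the requirement that $0$ be a simple eigenvalue of $\adj(A)$ (so that $\dim\ker\adj(A)=1$) restricts us to $n\leq 2$. The case $n=1$ is immediate. For $n=2$, the identities $\adj(A)A=A\adj(A)=0$ together with $\bar{x}$ spanning $\ker(\adj(A))$ and $\bar{y}^T$ spanning the left kernel of $\adj(A)$ force $A=\alpha\,\bar{x}\bar{y}^T$ for some nonzero $\alpha\in\mathbb{R}$; then either $A$ or $-A$ is a positive matrix, and algebraic positivity of $A$ follows from Lemma~\ref{lem2.6}.

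The step I expect to be the trickiest is the singular branch of $\mu\neq 0$ together with the $\mu=0$ case. The invertible subcase is essentially mechanical once one writes $\adj(A)=\det(A)A^{-1}$, but in the singular subcases one must leverage the precise rank structure of $\adj(A)$ and Lemma~\ref{lem2.1} to ensure that both simplicity and positivity genuinely transfer from $\adj(A)$ to $A$.
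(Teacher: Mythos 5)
Your argument is correct: the invertible branch works because the correspondence $t\mapsto\det(A)/t$ between the spectra of $A$ and $\adj(A)=\det(A)A^{-1}$ preserves algebraic multiplicities, so $\lambda=\det(A)/\mu$ is simple; the singular branch with $\mu\neq0$ works because $\adj(A)\neq0$ forces $\rk(A)=n-1$, and Lemma~\ref{lem2.1}.2 with $\bar{y}^T\bar{x}>0$ upgrades geometric to algebraic simplicity of $0$; and the $\mu=0$ branch does collapse to $n\leq2$, where $A=\alpha\,\bar{x}\bar{y}^T$ and Lemma~\ref{lem2.6} finish the job. Note, however, that the paper itself gives no proof (the lemma is quoted from \cite{DB19}), and the standard argument is much shorter and in exactly the spirit of Lemma~\ref{lem3.7}: by the Cayley--Hamilton theorem, $\adj(A)$ is itself a real polynomial in $A$, namely $\adj(A)=(-1)^{n-1}\bigl(A^{n-1}+c_{n-1}A^{n-2}+\cdots+c_1I\bigr)$ where $t^n+c_{n-1}t^{n-1}+\cdots+c_1t+c_0$ is the characteristic polynomial of $A$; hence if $f(\adj(A))$ is positive, then $(f\circ q)(A)$ is positive for the corresponding polynomial $q$, and $A$ is algebraically positive directly from the definition, with no eigenvector analysis and no case split on $\det(A)$ or $\mu$. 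Your route has the merit of exhibiting the simple eigenvalue and positive eigenvectors of $A$ explicitly, but at the cost of a three-way case analysis whose most delicate steps (the singular subcases) the polynomial-composition argument avoids entirely.
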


\begin{lemma}[\cite{DB19}]\label{lem3.9}
If all nonzero off-diagonal entries of an irreducible sign pattern matrix are the same sign ($+$ or $-$), then that sign pattern matrix requires algebraic positivity.
\end{lemma}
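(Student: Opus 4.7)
The plan is to reduce the claim to the classical Perron--Frobenius theorem via Lemma \ref{lem2.6}. Let $A$ be an irreducible sign pattern matrix whose nonzero off-diagonal entries are all of the same sign. Pick any $B\in Q(A)$; we must show $B$ is algebraically positive. By Lemma \ref{lem2.6}.2, replacing $B$ by $-B$ if necessary, we may assume every nonzero off-diagonal entry of $A$ is $+$. Then every off-diagonal entry of $B$ is non-negative.

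Next I would shift by a large constant to obtain a non-negative irreducible matrix. Choose $c>0$ large enough that $b_{ii}+c>0$ for every $i$, and set $C=B+cI$. Every off-diagonal entry of $C$ equals the corresponding entry of $B$, hence is non-negative, while every diagonal entry is positive, so $C$ is a non-negative matrix. Since $A$ is irreducible, $D(A)$ is strongly connected; the digraph of $C$ contains every arc of $D(A)$ (plus loops at every vertex), so $C$ is also irreducible as a non-negative matrix.

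Now I invoke Perron--Frobenius: an irreducible non-negative matrix has a simple real eigenvalue (its spectral radius) with positive left and right eigenvectors. Thus $C$ meets the hypothesis of Theorem \ref{thm1.1}, and so $C$ is algebraically positive. Finally, Lemma \ref{lem2.6}.4 shows $B=C-cI$ is algebraically positive as well. Since $B\in Q(A)$ was arbitrary, $A$ requires algebraic positivity.

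There is no real obstacle here: the whole argument is a two-line reduction once one is willing to cite Perron--Frobenius and the shift-invariance of algebraic positivity. The only small points to be careful about are (i) that the diagonal of $A$ is unconstrained, which is why we must shift rather than argue directly about $B$, and (ii) that irreducibility of the sign pattern $A$ transfers cleanly to non-negative irreducibility of $C=B+cI$, which holds because adding loops cannot destroy strong connectivity.
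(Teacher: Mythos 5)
Your proof is correct. The paper itself gives no argument for this lemma (it is quoted from \cite{DB19}), and your route --- normalize the common sign via Lemma \ref{lem2.6}.2, shift by $cI$ to get an irreducible non-negative matrix, apply Perron--Frobenius together with Theorem \ref{thm1.1}, and undo the shift via Lemma \ref{lem2.6}.4 --- is the standard proof of this fact; the transfer of irreducibility under the shift is handled correctly, since loops do not affect strong connectivity.
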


\subsection{Cycles with loops and negative 2-cycles only}
The following lemma gives a necessary condition for a sign pattern matrix associated with cycle graphs to require algebraic positivity if no diagonal entries are positive and no 2-cycles are positive.

\begin{lemma}\label{lem3.11}
Let $A$ be a sign pattern matrix of order $n$ of the form \eqref{cycle} such that $a_{ii}=-_0$ for all $i$ and all directed 2-cycles in $D(A)$ are negative. Let $D(A_+)$ contains a directed $n$-cycle, $D(A_-)$ does not contain a directed $n$-cycle and $a_{ij}a_{ji}=-$ for some $i\neq j$. If $A$ is not SNS and $A$ requires algebraic positivity, then $a_{ii}=-$ and $a_{jj}=-$ whenever $a_{ij}=+$ and $a_{ji}=0$ for some $i\neq j$.
\end{lemma}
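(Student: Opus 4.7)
I argue by contradiction. By Lemma \ref{lem2.6}.3, after cyclic relabeling I may assume the positive directed $n$-cycle in $D(A_+)$ is $(1,2,\ldots,n,1)$, so that $a_{ij}=+$ whenever $j=i+1 \mod n$, and that the pair in question is $(1,2)$, giving $a_{12}=+$ and $a_{21}=0$. Suppose, for contradiction, that either $a_{22}=0$ or $a_{11}=0$. In either case I use the hypothesis that $A$ is not SNS to pick $B\in Q(A)$ with $\det B=0$, produce a nonzero nonnegative right or left eigenvector of $B$ for the eigenvalue $0$ with at least one zero coordinate, and then apply Lemma \ref{lem2.2} to conclude that $B$ is not algebraically positive, contradicting that $A$ requires algebraic positivity. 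The substantive step in each case is verifying nonnegativity of the kernel vector, which relies on the tight sign structure of the cycle (positive forward arcs; nonpositive back arcs and diagonal entries).

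\smallskip\noindent\emph{Case $a_{22}=0$.}
Let $\bar v \in \ker B \setminus \{\bar 0\}$. Row 2 of $B\bar v=\bar 0$ is $b_{22}v_2+b_{23}v_3=0$; since $b_{21}=b_{22}=0$ and $b_{23}>0$, this forces $v_3=0$. If additionally $v_2=0$, rows $3,4,\ldots,n-1$ (each of the form $b_{i,i-1}v_{i-1}+b_{ii}v_i+b_{i,i+1}v_{i+1}=0$, with $b_{i,i+1}>0$) successively force $v_4=\cdots=v_n=0$, then row $n$ forces $v_1=0$, and $\bar v=\bar 0$. Hence (after a sign change) $v_2>0$, and the same recurrence, using $b_{i,i+1}>0$ and $b_{i,i-1},b_{ii}\leq 0$, now propagates $v_4,\ldots,v_n,v_1\geq 0$; row 1 is automatic from $\det B=0$. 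Thus $\bar v$ is a nonzero nonnegative right eigenvector of $B$ with $v_3=0$, and Lemma \ref{lem2.2} supplies the contradiction.

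\smallskip\noindent\emph{Case $a_{11}=0$.}
Expanding $\det B$ along column 1 (only $b_{n1}$ is nonzero there, as $b_{11}=b_{21}=0$ and $b_{i1}=0$ for $3\leq i\leq n-1$), and then the resulting $(n-1)\times(n-1)$ minor $B_{n1}$ along its first row (whose only nonzero entries are $b_{12}$ and $b_{1n}$), one obtains
\[
\det B=(-1)^{n+1}\!\prod_{i=1}^{n}b_{i,\,i+1 \mod n} \;-\; b_{1n}b_{n1}\det B[\{2,\ldots,n-1\}].
\]
If $a_{1n}=0$, then $b_{1n}=0$ for every $B\in Q(A)$ and $\det B$ has the constant sign $(-1)^{n+1}$, making $A$ an SNS matrix---contradiction. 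Hence $a_{1n}=-$. Now for a singular $B\in Q(A)$ consider $\bar w^{T}B=\bar 0^{T}$: the column-1 equation $b_{n1}w_n=0$ forces $w_n=0$, and reading columns $n,n-1,\ldots,3$ in turn (using $b_{1n}<0$, the positive forward arcs, and the nonpositive back arcs and diagonals) expresses $w_1,w_{n-2},w_{n-3},\ldots,w_2$ as nonnegative multiples of $w_{n-1}$, while column 2 is precisely the $\det B=0$ consistency. As before, $w_{n-1}=0$ would force $\bar w=\bar 0$, so $w_{n-1}>0$ may be taken, yielding a nonzero nonnegative left eigenvector of $B$ with $w_n=0$, to which Lemma \ref{lem2.2} again applies. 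The main obstacle, as indicated, is extracting the sign $a_{1n}=-$ from \enquote*{$A$ is not SNS}; it is exactly this sign that produces the cancellation in column $n$ needed to start the left-kernel recurrence.
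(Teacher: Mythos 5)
Your proof is correct, but it takes a genuinely different route from the paper's. The paper never works with a singular matrix: it uses the failure of sign non-singularity to choose $B\in Q(A)$ with $\det B=(-1)^n$, notes that (diagonal $-_0$, all $2$-cycles negative, and the only other cycle being the positive $n$-cycle) all coefficients of the characteristic polynomial of $B$ are nonnegative, so by Descartes' rule of signs every real eigenvalue of $B$ is negative; then, if $a_{nn}=0$ (resp.\ $a_{11}=0$), the $n$-th column (resp.\ first row) of $B-\lambda I$ is a nonzero nonnegative vector for every $\lambda<0$, which is incompatible with the positive left (resp.\ right) eigenvector guaranteed by Theorem \ref{thm1.1}. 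You instead take a singular $B\in Q(A)$ (which exists directly from the definition of SNS) and exploit the near-tridiagonal structure of \eqref{cycle}: an explicit recurrence shows that any right null vector (case $a_{22}=0$) or left null vector (case $a_{11}=0$) can be normalized to be nonnegative with a zero coordinate, and Lemma \ref{lem2.2} finishes. Your determinant expansion giving $a_{1n}=-$ is the step that replaces the paper's normalization $a_{1n}=0$ (the paper rotates the zero backward arc to position $(1,n)$, you rotate it to $(2,1)$; the two are equivalent under Lemma \ref{lem2.6}.3). What each buys: yours is more elementary and self-contained---no Descartes, no spectral reasoning beyond Lemma \ref{lem2.2}---at the cost of index bookkeeping in the kernel recurrences; the paper's is shorter and locates the obstruction at a negative eigenvalue without computing any null vector. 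Two small polish points: the phrases \enquote{row 1 is automatic from $\det B=0$} and \enquote{column 2 is precisely the $\det B=0$ consistency} are better stated as: all $n$ equations hold because the vector was taken from the kernel (or left kernel) of the singular matrix $B$; and you should say explicitly that a backward arc cannot be $+$ (it would create a positive $2$-cycle with the positive forward arc), which is what upgrades $a_{1n}\neq 0$ to $a_{1n}=-$.
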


\begin{proof}
Since $D(A_+)$ contains a directed $n$-cycle, let $a_{ij}=+$ for all $i,j$ with $j=i+1\mod n$. Since $D(A_-)$ does not contain a directed $n$-cycle, assume that $a_{1n}=0$. 

Now $D(A)$ has no directed $k$-cycle for $2<k<n$ and only one directed $n$-cycle with sign $+$. The standard determinant expansion of every matrix in $Q(A)$ has a term with sign $(-)^{n-1}$ corresponding to the $n$-cycle. Since $A$ is not SNS, the standard determinant expansion of every matrix in $Q(A)$ has a term with sign $(-)^n$. So we can choose $B\in Q(A)$ such that $\det B=(-1)^n$. Moreover, all 2-cycles in $D(A)$ are negative. So the coefficients of $x^i$ in the characteristic polynomial of $B$ are non-negative for $i=1,2,\ldots,n$. Consequently, the characteristic equation $p(B)=0$ has no positive real root, by Descartes' rule of signs. Thus all real eigenvalues of $B$ are negative, as $\det B\neq0$. Since $A$ requires algebraic positivity, by Theorem \ref{thm1.1}, $B$ has positive left and right eigenvectors corresponding to a negative eigenvalue.

If $a_{nn}=0$, then for any $\lambda<0$, the $n$-th column of $B-\lambda I$ is a nonzero, non-negative vector so that $B-\lambda I$ does not have a positive left eigenvector. Similarly, if $a_{11}=0$, then for any $\lambda<0$, the first row of $B-\lambda I$ is a nonzero, non-negative vector so that $B-\lambda I$ does not have a positive right eigenvector. Thus if either $a_{11}=0$ or $a_{nn}=0$, then $B$ is not algebraically positive so that $A$ does not require algebraic positivity, a contradiction. Therefore $a_{11}=-$ and $a_{nn}=-$. Hence the result follows.
\end{proof}

The following result is a necessary condition for a sign pattern of the form \eqref{cycle} to require algebraic positivity when there is no positive directed 2-cycle in its digraph.

\begin{lemma}\label{lem3.12}
Let $A$ be a sign pattern matrix of order $n$ of the form \eqref{cycle} and all directed 2-cycles in $D(A)$ are negative. Let $D(A_+)$ contains a directed $n$-cycle, $a_{ij}a_{ji}=-$ for some $i\neq j$, and $A$ requires algebraic positivity. 
\begin{enumerate}
\item If $D(A_-)$ does not contain an $n$-cycle and $a_{ii}=-_0$ for all $i$, then $A$ is SNS.
\item If $D(A_-)$ contains an $n$-cycle, then $n$ is odd and $a_{ii}=0$ for all $i$. 
\end{enumerate} 
\end{lemma}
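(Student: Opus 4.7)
The plan is to prove both parts by contradiction: assuming the stated hypotheses together with the negation of the conclusion, I will construct $B\in Q(A)$ that fails to be algebraically positive, contradicting the hypothesis that $A$ requires algebraic positivity.

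For Part~1, I will use Lemma~\ref{lem2.6}(3) to rotate so that the missing backward arc lies at position $(n,1)$, giving $a_{1n}=0$ and $a_{n1}=+$. Under the extra assumption that $A$ is not SNS, the hypotheses of Lemma~\ref{lem3.11} are now satisfied and yield $a_{11}=a_{nn}=-$. Then I will choose $B\in Q(A)$ with $|a_{11}|$ equal to a large parameter $t$ and all other magnitudes of order $1$; by a Gershgorin-type estimate, $B$ has an isolated simple real eigenvalue $\mu_t\approx -t$. Writing $B\bar z=\mu_t\bar z$ row by row from $z_1=1$ propagates a two-term recurrence around the cycle, and when it closes at row $n$ (which uses $B_{n1}=+$ and the absent entry $a_{1n}=0$), some coordinate of $\bar z$ is forced to have the sign opposite to $z_1$, so $\mu_t$ cannot witness algebraic positivity. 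Any other real eigenvalue of $B$ is excluded as an AP-witness via the column/row sign test from the proof of Lemma~\ref{lem3.11}: for any candidate $\lambda<0$, column $n$ of $B-\lambda I$ is nonzero and non-negative whenever $|\lambda|\ge|a_{nn}|$, blocking a positive left eigenvector, while the dual test on row $1$ handles the complementary range.

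For Part~2, the hypothesis that $D(A_-)$ contains an $n$-cycle means every backward arc is $-$, so $a_{ij}a_{ji}=-$ for every adjacent pair $(i,j)$, and Lemma~\ref{lem3.4} forces $a_{ii}\in\{0,-\}$ for every $i$. I will proceed in two substeps. First, to derive $a_{ii}=0$ for every $i$, suppose some $a_{kk}=-$; by Lemma~\ref{lem2.6}(3) take $k=1$. Let $B\in Q(A)$ have $B_{11}=-t$ large, $B_{i,i+1}=1$, $B_{i+1,i}=-1$, and $B_{ii}=0$ for $i\ge 2$. With every backward arc present, the eigenvector equation reduces for $i\ge 2$ to the clean recurrence $z_{i+1}=z_{i-1}+\mu z_i$; combined with row~$1$ and the cyclic closure at row~$n$, a short calculation shows that the right eigenvector of the isolated simple real eigenvalue $\mu_t\approx -t$ has an entry of sign opposite to $z_1$ (for instance, when $n=4$ one finds $z_4\approx -1/t$). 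The column/row sign test rules out the other real eigenvalues of $B$, giving a contradiction. Second, once $a_{ii}=0$ has been established for every $i$, pick $B\in Q(A)$ with $B_{i,i+1}=1$, $B_{i+1,i}=-1$, and $B_{ii}=0$. If $n$ is even, the vector $\bar x$ with $x_i=1$ for odd $i$ and $x_i=0$ for even $i$ satisfies $B\bar x=\bar 0$ and is a non-negative right null vector of $B$ with zero entries, so Lemma~\ref{lem2.2} gives $B$ not algebraically positive (equivalently, $B$ is a real skew-symmetric matrix of even order, and Proposition~\ref{prop2.8} applies). Hence $n$ must be odd.

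The main obstacle will be the eigenvector-propagation step: checking, for each cycle length $n\ge 3$ and (in Part~1) each configuration of missing backward arcs, that the propagated recurrence really produces a coordinate with the wrong sign, so that the right eigenvector of $\mu_t$ is genuinely not positive. The exclusion of all other real eigenvalues of the constructed $B$ from witnessing AP is handled cleanly by the column/row non-negativity arguments already developed in the proof of Lemma~\ref{lem3.11}.
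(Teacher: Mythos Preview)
Your Part~2 second substep is correct and matches the paper's argument (either the null vector via Lemma~\ref{lem2.2} or the skew-symmetric observation via Proposition~\ref{prop2.8} works).

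The remaining two pieces, however, have a genuine gap that you yourself flag: the eigenvector-propagation step is never carried out. You assert that the recurrence forces a coordinate of the wrong sign and that the column/row non-negativity tests exclude all \emph{other} real eigenvalues, but neither claim is verified. In Part~1 the range-splitting on $|\lambda|$ versus $|a_{nn}|$ and $|a_{11}|$ does not obviously cover all other real eigenvalues of your specific $B$ (the ``complementary range'' for row~1 requires $\lambda\le a_{11}=-t$, which is exactly the range of $\mu_t$, not of the remaining eigenvalues), and in both parts the sign analysis of the propagated $z_i$ is left as a case-check over all $n$ and over all configurations of missing backward arcs. As it stands this is a plan, not a proof.

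The paper avoids this entirely with two short observations. For Part~1, instead of extracting only $a_{11}=a_{nn}=-$ from Lemma~\ref{lem3.11}, use its full conclusion: for \emph{every} pair with $a_{i,i+1}=+$, $a_{i+1,i}=0$ one gets $a_{ii}=a_{i+1,i+1}=-$. Hence for every column index $k$ either $a_{k+1,k}=-$ or $a_{kk}=-$, and $a_{nn}=-$. Now Proposition~\ref{prop2.3} applies directly: one can pick $B\in Q(A)$ and $\lambda<0$ so that each column of $(B-\lambda I)(1,:)$ has both signs (or is zero), with no eigenvector computation at all. For Part~2, the conclusion $a_{ii}=0$ is a one-liner: since $D(A_-)$ contains an $n$-cycle, $D((-A)_+)$ does too, so Lemma~\ref{lem3.4} applied to $A$ forbids $a_{ii}=+$ and applied to $-A$ forbids $a_{ii}=-$. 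Both arguments sidestep the spectral analysis you are proposing.
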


\begin{proof}
Assume that $A$ requires algebraic positivity. Since $D(A_+)$ contains a directed $n$-cycle, let $a_{ij}=+$ for all $i,j$ with $j=i+1\mod n$.
\begin{enumerate}
\item Suppose that $A$ is not SNS. Since $a_{ij}a_{ji}=-$ for some $i\neq j$ and $D(A_-)$ does not contain a directed $n$-cycle, assume that $a_{21}=-$ and $a_{1n}=0$. Further, by Lemma \ref{lem3.11}, $a_{ii}=-$ and $a_{jj}=-$ provided that $a_{ji}=0$ for some $j=i+1\mod n$. So for every $k$-th column of $A$, either $a_{kk}=-$ or $a_{k+1,k}=-$ for $1\leq k\leq n-1$, and $a_{nn}=-$.

Now we can choose $B\in Q(A)$ and $\lambda<0$ such that the second column of $(B-\lambda I)(1,:)$ is zero vector or has both $+$ and $-$, and all other columns of $(B-\lambda I)(t,:)$ has both $+$ and $-$. So by Proposition \ref{prop2.3}, $A$ does not require algebraic positivity, a contradiction. Therefore $A$ is SNS.
\item Suppose that $D(A_-)$ contains an $n$-cycle. Then $a_{ij}a_{ji}=-$ for all $i,j$ with $j=i+1\mod n$. So by Lemma \ref{lem2.6} and \ref{lem3.4}, $a_{ii}=0$ for all $i$. If $n$ is even, we can choose $B\in Q(A)$ by replacing $+$ with 1 and $-$ by $-1$ so that $B$ is skew-symmetric. Then by Proposition \ref{prop2.8}, $B$ is not algebraically positive, a contradiction to the assumption that $A$ requires algebraic positivity. Therefore $n$ is odd. 
\end{enumerate}
\end{proof}

The following lemma provides a sufficient condition a sign pattern matrix to require algebraic positivity using Theorem \ref{thm3.6}.

\begin{lemma}\label{lem3.13}
Let $A$ be a sign pattern matrix of order $n$ of the form \eqref{cycle}, all directed 2-cycles in $D(A)$ are negative and $a_{ij}=+$ for all $i,j$ with $j=i+1\mod n$. If $A$ is an SNS matrix such that $a_{ii}=-_0$ for all $i$, then $A$ requires algebraic positivity. 
\end{lemma}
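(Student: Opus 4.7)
The plan is to show that $\adj(B)$ is algebraically positive for every $B \in Q(A)$; the conclusion then follows from Lemma~\ref{lem3.8}. To achieve this I aim to show that $\adj(B)$ is a nonnegative, irreducible matrix (after a possible global sign flip justified by Lemma~\ref{lem2.6}), so that the Perron-Frobenius theorem together with Theorem~\ref{thm1.1} supplies the algebraic positivity of $\adj(B)$.

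The main tool is Theorem~\ref{thm3.6} applied to each off-diagonal position $(s,r)$ of $\adj(B)$. Because the underlying graph of $A$ is an $n$-cycle and simple paths cannot revisit vertices, for any $s \neq r$ there are at most two directed paths in $D(A)$ from $s$ to $r$: the forward path $\alpha_f$ along the directed $n$-cycle in $D(A_+)$ (which always exists, with every arc positive), and the backward path $\alpha_b$ along $D(A_-)$ (which exists only when all the required backward arcs are nonzero). For each such path $\alpha$ and each nonzero term $\beta$ in the standard determinant expansion of the principal submatrix $A(\alpha)$ on the complementary arc, I would compute $(-1)^l\sign(\alpha)\sign(\beta)$, where $l$ is the length of $\alpha$. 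Since $A$ is SNS, the observation in the proof of Theorem~\ref{thm3.6} says that $A(\alpha)$ is itself SNS whenever $\sign(\alpha)\neq 0$; consequently the sign of $\beta$ is constant as $\beta$ varies over all tilings of $V\setminus V(\alpha)$ by loops (at vertices with $a_{ii}=-$) and negative $2$-cycles (at edges whose backward arc is nonzero), and each path $\alpha$ contributes a single uniform sign to $\adj(B)_{s,r}$.

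When both $\alpha_f$ and $\alpha_b$ exist, their contributions must agree in sign because of the global SNS hypothesis on $A$: a valid pair $(\alpha_f,\beta_f)$ closes up into a full cycle cover of $V$ (the forward $n$-cycle together with $\beta_f$), and likewise $(\alpha_b,\beta_b)$ produces another full cycle cover; both cycle-cover terms of $\det A$ share the common sign $\epsilon$ prescribed by SNS, and unwinding the factors of $-1$ arising from the permutation signs of $\alpha_f$ and $\alpha_b$ versus those of the closed $n$-cycles shows $(-1)^{l_f}\sign(\alpha_f)\sign(\beta_f) = (-1)^{l_b}\sign(\alpha_b)\sign(\beta_b)$. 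Hence every off-diagonal entry of $\adj(B)$ has a single uniform sign $\eta$. After possibly replacing $B$ by $-B$ using Lemma~\ref{lem2.6}, I may assume $\eta=+$, so that $\adj(B)$ is nonnegative.

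Finally, the digraph of $\adj(B)$ contains the forward $n$-cycle as soon as each complementary arc $V\setminus V(\alpha_f)$ for consecutive pairs $(s,s+1)$ admits a tiling; the hypothesis $a_{ii}=-_0$ together with the SNS property forces enough loops or negative $2$-cycles on each such $(n-2)$-vertex path, so $\adj(B)$ is irreducible. Perron-Frobenius then provides a simple positive eigenvalue with positive left and right eigenvectors, and Theorem~\ref{thm1.1} together with Lemma~\ref{lem3.8} completes the proof. I expect the main obstacle to be the sign-consistency argument between $\alpha_f$ and $\alpha_b$ when both exist, and the explicit verification that every complementary arc admits a valid tiling; both steps require the SNS hypothesis to be unpacked carefully against the pattern of vanishing diagonals and vanishing backward arcs.
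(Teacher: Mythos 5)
Your overall plan (use Theorem \ref{thm3.6} to pin down the signs of the off-diagonal entries of $\adj(B)$, show they are all of one sign and that $\adj(B)$ is irreducible, then conclude via Lemma \ref{lem3.8}) is exactly the paper's, but the two steps you defer as ``main obstacles'' are where the entire proof lives, and your sketches of both are incorrect. The forward/backward consistency argument does not work as stated: a pair $(\alpha_f,\beta_f)$ cannot be ``closed up'' with the forward $n$-cycle, since that cycle already covers all $n$ vertices and so cannot coexist with $\beta_f$ in a cycle cover, and for non-adjacent $r,s$ there is no arc from $r$ to $s$ in a cycle graph with which to close the path. Moreover, no such matching argument can prove your claimed identity, because the mechanism is different: a direct parity computation shows that any nonzero term $\beta$ of $\det B(\alpha)$ (a path pattern with $-_0$ diagonal and negative $2$-cycles, hence only loop/2-cycle covers) has sign $(-1)^m$ with $m$ the order of $B(\alpha)$, so every nonzero forward contribution equals $(-1)^{n-1}$, while a backward contribution of length $l$ would equal $(-1)^{n-l-1}$; when $l$ is odd this is the \emph{opposite} sign, and the paper disposes of it by showing $\beta$ must vanish --- tile $B[\alpha_b]$ perfectly by its negative $2$-cycles (possible since $\alpha_b$ covers an even number of vertices and all its backward arcs are nonzero) and pair the resulting term $\delta$ with $\beta$ to get a term of $\det B$ of sign opposite to the forward $n$-cycle term, contradicting SNS. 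Nothing in your proposal supplies this vanishing argument. Note also that Theorem \ref{thm3.6} and per-path uniformity only control each entry $(s,r)$ separately; your single symbol $\eta$ silently assumes all entries share one sign, which again needs the explicit computation showing the common value $(-1)^{n-1}$ is independent of $(s,r)$.

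Your irreducibility step also points at the wrong entries. The $(s,s+1)$ entries require a loop/2-cycle tiling of the complementary $(n-2)$-vertex path, and such a tiling need not exist: for $n=3$ with the forward cycle $+$ and every other entry $0$, the pattern satisfies all hypotheses (it is SNS) yet those entries of $\adj(B)$ are zero. The entries that are guaranteed nonzero are the $(s,s-1)$ positions, reached by the forward path of length $n-1$ whose complement is empty, so the contribution is automatically $(-1)^{n-1}\neq 0$; these alone give a directed $n$-cycle in $D(\adj(B))$ and hence irreducibility. With these repairs (the parity computation, the SNS vanishing argument for odd-length backward paths, and the length-$(n-1)$ paths for irreducibility) your outline becomes the paper's proof; as written, the key steps are either missing or would fail.
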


\begin{proof}
Let $B\in Q(A)$. Since all directed 2-cycles in $D(A)$ are negative and $a_{ij}=+$ for all $i,j$ with $j=i+1\mod n$, every path in $D(A)$ consists of either all positive edges or all negative edges.

Given two distinct indices $r$ and $s$, we wish to determine the sign of the $(s,r)$-entry of $\adj(B)$, that is, $(-)^l\sign(\alpha)\cdot\sign(\beta)$, where $\alpha=(s,i_1,i_2,\ldots,i_k,r)$ is a directed path from $s$ to $r$ in $D(A)$, $l$ is the length of $\alpha$, and $\beta$ is a nonzero term in the determinant expansion of $A(\alpha)$ obtained from $A$ by deleting rows and columns corresponding to the vertices covered by $\alpha$. 
\begin{enumerate}
\item Let $n$ be odd. Since $A$ is SNS, and $D(A)$ contains the cycle $\gamma=(1,2,3,\ldots,n,1)$ with $\sign\gamma=+$, by Lemma \ref{lem3.5}, every nonzero term in the determinant expansion of $B$ has sign $(-)^{n-1}\cdot+=+$.
\begin{enumerate}
\item Assume that the directed path $\alpha$ consists of all positive edges.
\begin{enumerate}
\item If $l$ is even, total number of vertices covered in $\alpha$ is odd. So $B(\alpha)$ is of even order, and thus $\sign(\beta)=+_0$. Moreover, $\sign(\alpha)=+$. In this case, the sign of the contributing term to the $(s,r)$-entry of $\adj(B)$ is $(-)^l\sign(\alpha)\cdot\sign(\beta)=+\cdot+\cdot+_0=+_0$. In particular, if $l=n-1$, then $\alpha$ covers all vertices. So the sign of the contributing term to the $(s,r)$-entry of $\adj(B)$ is $(-)^l\sign(\alpha)=+\cdot+=+$. Consequently, the $(s,r)$-entry of $\adj(B)$ has a contributing term with sign $+$ for all $(s,r)\in\{(2,1),(3,2),\ldots,(n,n-1),(1,n)\}$.
\item If $l$ is odd, total number of vertices covered in $\alpha$ is even. So $B(\alpha)$ is of odd order, and thus $\sign(\beta)=-_0$. Moreover, $\sign(\alpha)=+$. In this case, the sign of the contributing term to the $(s,r)$-entry of $\adj(B)$ is $(-)^l\sign(\alpha)\cdot\sign(\beta)=-\cdot+\cdot-_0=+_0$.
\end{enumerate}
\item Assume that the directed path $\alpha$ consists of all negative edges. 
\begin{enumerate}
\item If $l$ is even, total number of vertices covered in $\alpha$ is odd. So $B(\alpha)$ is of even order, and thus $\sign(\beta)=+_0$. Moreover, $\sign(\alpha)=+$. In this case, the sign of the contributing term to the $(s,r)$-entry of $\adj(B)$ is $(-)^l\sign(\alpha)\cdot\sign(\beta)=+\cdot+\cdot+_0=+_0$. 
\item If $l$ is odd, total number of vertices covered in $\alpha$ is even. So $\sign(\alpha)=+$. Let $B[\alpha]$ be the submatrix of $B$ having rows and columns corresponding to the indices in $\alpha$. Since $\alpha$ consists of only negative arcs and covers even number of vertices, $B[\alpha]$ has a nonzero term $\delta$ in its determinant expansion with sign $+$. Now $B(\alpha)$ is of odd order. If $\beta$ is a nonzero term in the determinant expansion of $B(\alpha)$, then $\sign(\beta)=-$ so that $\det B$ has a term with sign $\sign(\delta)\cdot\sign(\beta)=-$, a contradiction. So $\sign(\beta)=0$. Thus the sign of the contributing term to the $(s,r)$-entry of $\adj(B)$ is $(-)^l\sign(\alpha)\cdot\sign(\beta)=-\cdot-\cdot 0=0$. 
\end{enumerate}
\end{enumerate}
Thus all nonzero off-diagonal entries of $\adj(B)$ have the sign $+$. Further, $\adj(B)$ is irreducible because the $(s,r)$-entry of $\adj(B)$ has the sign $+$ for all $(s,r)\in\{(2,1),(3,2),\ldots,(n,n-1),(1,n)\}$. Therefore, by Lemma \ref{lem3.8}, $B$ is algebraically positive. Hence $A$ requires algebraic positivity.
\item Let $n$ be even. Since $A$ is SNS, and $D(A)$ contains the cycle $\gamma=(1,2,3,\ldots,n,1)$ with $\sign\gamma=+$, by Lemma \ref{lem3.5}, every nonzero term in the determinant expansion of $B$ has sign $(-)^{n-1}\cdot+=-$.
\begin{enumerate}
\item Assume that the directed path $\alpha$ consists of all positive edges.
\begin{enumerate}
\item If $l$ is even, total number of vertices covered in $\alpha$ is odd. So $B(\alpha)$ is of odd order, and thus $\sign(\beta)=-_0$. Moreover, $\sign(\alpha)=+$. In this case, the sign of the contributing term to the $(s,r)$-entry of $\adj(B)$ is $(-)^l\sign(\alpha)\cdot\sign(\beta)=+\cdot+\cdot-_0=-_0$. 
\item If $l$ is odd, total number of vertices covered in $\alpha$ is even. So $B(\alpha)$ is of even order, and thus $\sign(\beta)=+_0$. Moreover, $\sign(\alpha)=+$. In this case, the sign of the contributing term to the $(s,r)$-entry of $\adj(B)$ is $(-)^l\sign(\alpha)\cdot\sign(\beta)=-\cdot+\cdot+_0=-_0$. In particular, if $l=n-1$, then $\alpha$ covers all vertices. So the sign of the contributing term to the $(s,r)$-entry of $\adj(B)$ is $(-)^l\sign(\alpha)=-\cdot+=-$. Consequently, the $(s,r)$-entry of $\adj(B)$ has a contributing term with sign $-$ for all $(s,r)\in\{(2,1),(3,2),\ldots,(n,n-1),(1,n)\}$.
\end{enumerate}
\item Assume that the directed path $\alpha$ consists of all negative edges. 
\begin{enumerate}
\item If $l$ is even, total number of vertices covered in $\alpha$ is odd. So $B(\alpha)$ is of odd order, and thus $\sign(\beta)=-_0$. Moreover, $\sign(\alpha)=+$. In this case, the sign of the contributing term to the $(s,r)$-entry of $\adj(B)$ is $(-)^l\sign(\alpha)\cdot\sign(\beta)=+\cdot+\cdot-_0=-_0$. 
\item If $l$ is odd, total number of vertices covered in $\alpha$ is even. So $\sign(\alpha)=+$. Let $B[\alpha]$ be the submatrix of $B$ having rows and columns corresponding to the indices in $\alpha$. Since $\alpha$ consists of only negative arcs and covers even number of vertices, $B[\alpha]$ has a nonzero term $\delta$ in its determinant expansion with sign $+$. Now $B(\alpha)$ is of even order. If $\beta$ is a nonzero term in the determinant expansion of $B(\alpha)$, then $\sign(\beta)=+$ so that $\det B$ has a term with sign $\sign(\delta)\cdot\sign(\beta)=+$, a contradiction. So $\sign(\beta)=0$. Thus the sign of the contributing term to the $(s,r)$-entry of $\adj(B)$ is $(-)^l\sign(\alpha)\cdot\sign(\beta)=-\cdot-\cdot 0=0$. 
\end{enumerate}
\end{enumerate}
Thus all nonzero off-diagonal entries of $\adj(B)$ have the sign $-$. Further, $\adj(B)$ is irreducible because the $(s,r)$-entry of $\adj(B)$ has the sign $-$ for all $(s,r)\in\{(2,1),(3,2),\ldots,(n,n-1),(1,n)\}$. Therefore, by Lemma \ref{lem3.8}, $B$ is algebraically positive. Hence $A$ requires algebraic positivity.
\end{enumerate}
\end{proof}

\begin{cor}\label{cor3.14}
Let $A$ be a sign pattern matrix of order $n$ of the form \eqref{cycle}, all directed 2-cycles in $D(A)$ are negative and $a_{ij}=+$ for all $i,j$ with $j=i+1\mod n$. If $a_{ii}=-_0$ for all $i$ covered by negative directed 2-cycles in $D(A)$ and $a_{ii}=+$ for some $i$, then $A$ requires algebraic positivity. 
\end{cor}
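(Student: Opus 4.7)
The first instinct---to reduce to Lemma~\ref{lem3.13} by the uniform shift $B\mapsto\tilde B:=B-cI$ with $c>\max_i b_{ii}$, so that $\tilde B$ has strictly negative diagonal and lies in $Q(\tilde A)$ where $\tilde A$ agrees with $A$ off-diagonally and has $\tilde a_{ii}=-$---runs into a concrete obstruction. A direct parity count shows that the $n$-cycle term of $\det\tilde A$ has sign $(-)^{n-1}$ while every loops-plus-2-cycles term has sign $(-)^n$, so $\tilde A$ is never SNS and Lemma~\ref{lem3.13} cannot be invoked on $\tilde A$ as a black box.

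The workable modification is instead the sign pattern $A'$ obtained from $A$ by replacing each diagonal entry $a_{ii}=+$ (for $i\in S:=\{i:a_{ii}=+\}$) by $0$, leaving all other entries unchanged. By Lemma~\ref{lem3.4}, the hypothesis that $A$ may require algebraic positivity forces $a_{i+1,i}=a_{i,i-1}=0$ at every $i\in S$, so no $S$-vertex lies on any 2-cycle of $D(A')$. Consequently, in any permutation contributing a nonzero standard determinant term of $\det A'$, each $S$-vertex must be covered by a directed cycle that is neither a loop (its diagonal is zero) nor a 2-cycle, leaving only the forward $n$-cycle as an option. Hence $\det A'$ has a unique nonzero term, the $n$-cycle term of sign $(-)^{n-1}$, and $A'$ is SNS. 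Since $A'$ also inherits the form~\eqref{cycle}, the positive forward $n$-cycle, the negative 2-cycles, and $a_{ii}=-_0$ from $A$, Lemma~\ref{lem3.13} gives that $A'$ requires algebraic positivity.

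The remaining task is to transfer this conclusion from $A'$ back to $A$. Given $B\in Q(A)$, set $B':=B-\sum_{i\in S}b_{ii}E_{ii}$, so that $B'\in Q(A')$ and is algebraically positive by the previous step. The hardest step will be lifting algebraic positivity from $B'$ to $B$, because the difference $B-B'$ is a nonnegative non-scalar diagonal perturbation and Lemma~\ref{lem2.6}(4) does not apply directly. The plan is to argue via the monotone deformation $B(t):=B'+t\sum_{i\in S}b_{ii}E_{ii}$ for $t\in[0,1]$, using that increasing the $S$-diagonal only strengthens the positive ``$n$-cycle plus $S$-loops'' backbone, so the simple real eigenvalue of $B'$ and its positive left and right eigenvectors (guaranteed by Theorem~\ref{thm1.1}) persist under the deformation to $B(1)=B$, giving that $B$ is algebraically positive.
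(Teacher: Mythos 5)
Your reduction to Lemma~\ref{lem3.13} via the pattern $A'$ (zeroing the diagonal $+$'s) is essentially sound, up to one slip in justification: the fact that no vertex of $S$ lies on a $2$-cycle cannot be obtained from Lemma~\ref{lem3.4} together with ``the hypothesis that $A$ may require algebraic positivity''---that is the statement being proved, so this phrasing is circular. It is, however, immediate from the corollary's hypothesis: every $i$ covered by a (negative) $2$-cycle has $a_{ii}=-_0$, so $a_{ii}=+$ forces $a_{i+1,i}=a_{i,i-1}=0$. Granting that, your computation that $A'$ is SNS and requires algebraic positivity by Lemma~\ref{lem3.13} is correct. The genuine gap is your last step: transferring algebraic positivity from $B'=B-\sum_{i\in S}b_{ii}E_{ii}$ back to $B$. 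This perturbation is not a scalar multiple of $I$, so Lemma~\ref{lem2.6}.4 does not apply, and the deformation ``plan'' is not a proof: along $B(t)=B'+t\sum_{i\in S}b_{ii}E_{ii}$ you must rule out that the distinguished eigenvalue loses simplicity or that some eigenvector entry reaches $0$ at an intermediate $t\in(0,1]$. The natural tool for excluding the latter, Lemma~\ref{lem2.2}, would require knowing that $B(t)\in Q(A)$ is algebraically positive, which is exactly the conclusion under proof; so as sketched the argument is circular and the corollary is not established.

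The irony is that the shift you discarded in your first paragraph is the right one, provided you shift by exactly $c=\max_i b_{ii}$ rather than $c>\max_i b_{ii}$. The maximizing index $k$ has $b_{kk}>0$, hence $a_{kk}=+$, hence $k$ lies on no $2$-cycle, so $a_{k+1,k}=a_{k,k-1}=0$. In the pattern of $B-b_{kk}I$ the loop at $k$ is zero, so every loops-plus-$2$-cycles cover vanishes (vertex $k$ can only be covered by its loop), and the backward $n$-cycle is absent; the forward $n$-cycle term is the unique nonzero term, so this pattern is SNS with all diagonal entries in $\{-,0\}$. Lemma~\ref{lem3.13} then makes $B-b_{kk}I$ algebraically positive, and Lemma~\ref{lem2.6}.4 transfers this to $B$, which is the paper's proof. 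Your parity count only shows that over-shifting (strictly negative diagonal everywhere) destroys the SNS property; it does not show that shifting is unworkable, and adopting the exact shift would let you delete the problematic third paragraph entirely.
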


\begin{proof}
Let $B\in Q(A)$, and let $a_{ii}=+$ for some vertex $i$ covered by no negative directed 2-cycles in $D(A)$. If $k=\arg\max_ib_{ii}$, then $k$ is covered by no negative directed 2-cycles in $D(A)$. Consider the pattern of $B-b_{kk}I$, which is SNS and all its diagonal entries are from $\{-,0\}$. Then by Lemma \ref{lem3.13}, $B-b_{kk}I$ is algebraically positive. So by Lemma \ref{lem2.6}, $B$ is algebraically positive, and hence $A$ requires algebraic positivity.
\end{proof}

We characterize all sign pattern matrices $A$ that require algebraic positivity, where $G(A)$ is a cycle and all directed 2-cycles in $D(A)$ are negative, through the following result.

\begin{theorem}\label{thm3.15}
Let $A$ be an irreducible sign pattern matrix of order $n$ of the form \eqref{cycle} such that $a_{ii}\neq0$ for some $i$. If all directed 2-cycles in $D(A)$ are negative, then $A$ requires algebraic positivity if and only if either $A$ or $-A$ satisfies one of the following conditions:
\begin{enumerate}
\item All nonzero off-diagonal entries have the sign $+$.
\item $D(A_+)$ contains a directed $n$-cycle and $D(A_-)$ does not contain a directed $n$-cycle such that $A$ is SNS and $a_{ii}=-_0$ for all $i$.
\item $D(A_+)$ contains a directed $n$-cycle and $D(A_-)$ does not contain a directed $n$-cycle such that $a_{ii}=+_0$ for all vertices $i$ not covered by any directed negative 2-cycles in $D(A)$.
\end{enumerate}
\end{theorem}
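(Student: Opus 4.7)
My plan splits into the usual two directions. In both, the preliminary reduction is the same: by Lemma~\ref{lem3.2} combined with Lemma~\ref{lem2.6}(1), I may assume that $D(A_+)$ contains a directed $n$-cycle, and by Lemma~\ref{lem2.6}(3) I may permute the vertices so that $a_{i,i+1}=+$ for every $i$ modulo $n$.

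For sufficiency, each condition plugs directly into an existing result. Condition~(1) matches the hypothesis of Lemma~\ref{lem3.9}. Condition~(2) matches the hypothesis of Lemma~\ref{lem3.13}. Condition~(3), together with the restriction $a_{ii}\in\{-,0\}$ at every covered vertex that is forced by Lemma~\ref{lem3.4} (since a positive diagonal entry at a covered vertex would immediately violate the requirement of AP), matches the hypothesis of Corollary~\ref{cor3.14}. The $A$-versus-$-A$ ambiguity in the statement is absorbed by Lemma~\ref{lem2.6}(1).

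For necessity, assume $A$ requires AP. Since some $a_{ii}\neq 0$, Lemma~\ref{lem3.12}(2) rules out $D(A_-)$ containing an $n$-cycle. I then case-split on whether any $a_{ij}a_{ji}=-$: if none, every backward arc $a_{i+1,i}$ lies in $\{0,+\}$ and condition~(1) holds. Otherwise, by Lemma~\ref{lem3.4}, $a_{ii}\in\{-,0\}$ at every covered vertex; if in addition every diagonal entry lies in $\{-,0\}$, Lemma~\ref{lem3.12}(1) shows that $A$ is SNS, so condition~(2) holds. The only remaining case is that some $a_{pp}=+$, in which case $p$ is necessarily uncovered, and condition~(3) requires me to prove that $a_{jj}\in\{+,0\}$ for every uncovered $j$.

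The hard part will be this last claim. The strategy is to suppose $a_{qq}=-$ for some uncovered $q$ and derive a contradiction via Proposition~\ref{prop2.3}: I take $t=m$ for some $m$ with $a_{m+1,m}=-$, which guarantees that column~$t$ of $(B-\lambda I)(t,:)$ contains both a $+$ (from $b_{m-1,m}$) and a $-$ (from $b_{m+1,m}$), and then try to choose $\lambda$ and the magnitudes of the entries of $B$ so that every remaining column is either zero or has both signs. Columns indexed by an uncovered vertex $k$ are especially constrained, because the hypothesis that all $2$-cycles are negative forces $a_{k+1,k}=a_{k,k-1}=0$ there, so such a column of $(B-\lambda I)(m,:)$ consists only of a single positive off-diagonal entry and the diagonal $b_{kk}-\lambda$, forcing $\lambda>b_{kk}$. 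Reconciling $\lambda>b_{pp}>0$ with the sign requirements imposed by columns adjacent to $m$ and through any other negative $2$-cycle, while leveraging the availability of $b_{qq}<0$ on the uncovered diagonal to open up room on the negative side, is the delicate point of the argument and will likely require a subcase analysis on the relative positions of $p$, $q$, and the negative $2$-cycles around the cycle.
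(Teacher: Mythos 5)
Your overall architecture, sufficiency via Lemmas \ref{lem3.9}, \ref{lem3.13} and Corollary \ref{cor3.14}, necessity via Lemmas \ref{lem3.2}, \ref{lem2.6}, \ref{lem3.4} and \ref{lem3.12}, is exactly the paper's. The problem is the step you yourself flag as the hard part: it is not proved (you only sketch a Proposition \ref{prop2.3} strategy and defer the subcase analysis), so the proposal is incomplete, and in fact that step cannot be carried out. If all forward arcs are $+$, all $2$-cycles are negative, the diagonal is $-_0$ at covered vertices, $a_{pp}=+$ at some uncovered $p$ and $a_{qq}=-$ at some uncovered $q$, then the hypotheses of Corollary \ref{cor3.14} are still satisfied (it constrains the diagonal only at covered vertices), so by the very result you use for sufficiency every $B\in Q(A)$ is algebraically positive; hence no choice of $B,t,\lambda$ can ever meet the hypotheses of Proposition \ref{prop2.3}, and your intended contradiction is unobtainable. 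A concrete instance is the $4\times4$ pattern with $a_{12}=a_{23}=a_{34}=a_{41}=+$, $a_{21}=-$, $a_{32}=a_{43}=a_{14}=0$, $a_{11}=a_{22}=0$, $a_{33}=+$, $a_{44}=-$: here the pattern of $B-b_{33}I$ is SNS for every $B\in Q(A)$, so Corollary \ref{cor3.14} applies, and indeed the member with unit magnitudes has characteristic polynomial $x^4-2$ with positive left and right eigenvectors for the simple eigenvalue $2^{1/4}$.

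Note also that the paper never attempts the claim you are struggling with. Its necessity argument stops at: covered vertices have $-_0$ diagonal (Lemma \ref{lem3.4}), $D(A_-)$ has no directed $n$-cycle (Lemma \ref{lem3.12}.2), and if every diagonal entry is $-_0$ then $A$ is SNS (Lemma \ref{lem3.12}.1); in the remaining case (some diagonal entry $+$) it simply declares condition 3, i.e.\ it implicitly reads condition 3 as the hypothesis of Corollary \ref{cor3.14} ($a_{ii}=-_0$ at covered vertices and $a_{ii}=+$ somewhere) rather than the literal statement that $a_{ii}=+_0$ at every uncovered vertex. Read literally, condition 3 is neither necessary (the example above) nor sufficient (it leaves covered diagonals unconstrained, and a $+$ there rules out requiring algebraic positivity by Lemma \ref{lem3.4}); this also exposes a circularity in your sufficiency step for condition 3, where you invoke Lemma \ref{lem3.4} on the grounds that a positive covered diagonal would violate the requirement of algebraic positivity, even though in that direction requiring algebraic positivity is the conclusion, not a hypothesis. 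The workable route is to prove the statement with condition 3 replaced by the Corollary \ref{cor3.14} hypothesis: then necessity is immediate from Lemma \ref{lem3.4} exactly as in the paper, and your hard case disappears.
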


\begin{proof}
Suppose that $A$ requires algebraic positivity. Then by Lemma \ref{lem3.2}, either $D(A_+)$ or $D(A_-)$ contains a directed $n$-cycle. Using Lemma \ref{lem2.6}, we can assume that $D(A_+)$ contains a directed $n$-cycle.

Suppose that $A$ has $-$ as a off-diagonal entry. Since $a_{ij}a_{ji}=-_0$ for all $i\neq j$, by Lemma \ref{lem3.4}, no vertex $k$ with $a_{kk}=+$ is covered by any directed negative 2-cycles in $D(A)$. Since $a_{ii}\neq0$ for some $i$, by Lemma \ref{lem3.12}, $D(A_-)$ does not contain a directed $n$-cycle. If $a_{ii}=-_0$ for all $i$, then $A$ is SNS. Therefore conditions 1, 2, 3 are necessary for $A$ to require algebraic positivity.

Lemma \ref{lem3.9}, \ref{lem3.13} and Corollary \ref{cor3.14} respectively show that conditions 1, 2, 3 are sufficient for $A$ to require algebraic positivity.
\end{proof}

\subsection{Cycles without loops and negative 2-cycles only}
The following result characterizes all sign pattern matrices of the form \eqref{cycle} with $a_{ii}=0$ for all $i$ that require algebraic positivity.

\begin{theorem}\label{thm3.10}
Let $A$ be a sign pattern matrix of order $n$ of the form \eqref{cycle} such that $a_{ii}=0$ for all $i$ and all directed 2-cycles are negative. Then $A$ requires algebraic positivity if and only if  either $D(A_+)$ or $D(A_-)$ contains a directed $n$-cycle, and one of the following conditions hold:
\begin{enumerate}
\item $n$ is odd.
\item $n$ is even and $A$ is SNS.
\end{enumerate} 
\end{theorem}

\begin{proof}
Suppose that $A$ requires algebraic positivity. Then by Lemma \ref{lem3.2}, either $D(A_+)$ or $D(A_-)$ contains a directed cycle of length $n$. 

Suppose that $n$ is even and $A$ is not SNS. Since all directed 2-cycles are negative and $a_{ii}=0$ for all $i$, we can choose $B\in Q(A)$ in such a way that the characteristic polynomial $p(B)$ is an even degree polynomial such that coefficient of $x^i$ is positive if $i$ is even number, and coefficient of $x^i$ is zero if $i$ is odd number. In that case, $p(B)=0$ has no real root. So $B$ is not algebraically positive, a contradiction.

Conversely, without loss of generality assume that $D(A_+)$ contains a directed $n$-cycle. So we can assume that $a_{ij}=+$ for $j=i+1 \mod n$. 

Assume that $n$ is odd. Since $a_{ii}=0$ for all $i$, and each directed walk of length 2 consists of either two positive arcs or either two negative arcs, all nonzero off-diagonal entries of $A^2$ are $+$. Further,
$$(A^2)_{ij}=\begin{cases}
+, & \mbox{if } j=i+2 \mod n; \\
0, & \mbox{otherwise}.
\end{cases}$$
So $A^2$ is irreducible and all nonzero off-diagonal entries are $+$. Therefore $A^2$ requires algebraic positivity, by Lemma \ref{lem3.9}, and hence $A$ requires algebraic positivity, by Lemma \ref{lem3.7}.

Assume that $n$ is even and $A$ is SNS. Then by Lemma \ref{lem3.13}, $A$ requires algebraic positivity.
\end{proof}

\subsection{Cycles with both positive and negative 2-cycles}
In this paper, we say that a sign pattern matrix of size $n\times(n+1)$ is in standard form if it has the following pattern:
\begin{equation}\label{eq3.2}
\begin{bmatrix}
-&+&0&\cdots&0 \\
-&-&+&\ddots&\vdots \\
\vdots&&\ddots&\ddots&0 \\
-&\cdots&\cdots&-&+
\end{bmatrix}
\end{equation}
Let us consider the following row/column transformations:
\begin{enumerate}
\item[] Transformation $T_1$: Interchange any two rows.
\item[] Transformation $T_2$: Multiply a row by $-$.
\item[] Transformation $T_3$: Interchange any two columns.
\end{enumerate}
A sign pattern matrix $A$ is a subpattern of another sign pattern matrix $B$ if $A$ is obtained from by replacing some (possibly none) nonzero entries with zero. The following lemma is deduced from a result by Lancaster \cite{L62}.

\begin{lemma}\label{lem3.16}
Let $A$ be a square matrix and every row has a positive and a negative entry. If for some $i$, the pattern of $A(i,:)$ can be transformed into a subpattern of the standard form \eqref{eq3.2} under transformations $T_1,T_2,T_3$, then $A\bar{x}=\bar{0}$ has a solution with all $x_i$ positive.
\end{lemma}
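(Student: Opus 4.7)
The approach is to invoke the forward-substitution construction underlying Lancaster's result on the $(n-1)\times n$ matrix $A(i,:)$, and then to use the flexibility afforded by the hypothesis that the $i$-th row of $A$ has both a $+$ and a $-$ entry to arrange $\bar{a}_i\cdot\bar{x}=0$ as well.

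First I would normalize. Applying $T_1,T_2,T_3$ to $A(i,:)$ turns its sign pattern into a subpattern of the standard form \eqref{eq3.2}; on the level of $A$ these operations correspond respectively to permuting the rows of $A$ other than row $i$, negating some of those rows, and permuting the columns of $A$. None of them affects the existence of a positive $\bar{x}$ with $A\bar{x}=\bar{0}$: row swaps and row negations preserve the kernel, while column permutations merely permute the coordinates of $\bar{x}$. Hence one may assume outright that $A(i,:)$ is a subpattern of \eqref{eq3.2}.

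Next, I would build $\bar{x}>0$ in the kernel of $A(i,:)$ by forward substitution. Fix $x_1>0$; assuming $x_1,\ldots,x_k>0$ have been chosen, the $k$-th row of $A(i,:)$ gives an equation $\sum_{j\le k}a_{kj}x_j + a_{k,k+1}x_{k+1}=0$ in which $a_{kj}\le 0$ for $j\le k$ and $a_{k,k+1}>0$. Solving yields
\[
x_{k+1}=-\frac{a_{k1}x_1+\cdots+a_{kk}x_k}{a_{k,k+1}}\ge 0,
\]
and this is the mechanism by which Lancaster's theorem delivers a strictly positive $\bar{x}$ with $A(i,:)\bar{x}=\bar{0}$.

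It remains to arrange $\bar{a}_i\cdot\bar{x}=0$ for the $i$-th row $\bar{a}_i$. Since $\bar{x}$ is strictly positive and $\bar{a}_i$ contains both a $+$ and a $-$ entry, the positive and negative contributions to $\bar{a}_i\cdot\bar{x}$ can be balanced by a suitable choice of magnitudes consistent with the prescribed sign pattern of $\bar{a}_i$ (the open positive orthant meets the hyperplane $\{\bar{v}:\bar{a}_i\cdot\bar{v}=0\}$ precisely because $\bar{a}_i$ changes sign), yielding $A\bar{x}=\bar{0}$ with $\bar{x}>0$. The main obstacle I anticipate is the second step: one must check that for every admissible subpattern of \eqref{eq3.2} the forward-substitution recursion really produces a strictly positive vector (some degenerate subpatterns, e.g.\ those in which the $+$ in some row has been deleted, would force $x_{k+1}=0$), and this is precisely the technical content of Lancaster's theorem that must be carefully quoted.
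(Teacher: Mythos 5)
Your first two steps are sound and are essentially the paper's intended argument: the operations $T_1,T_2,T_3$ only permute or negate rows of $A(i,:)$ and permute the coordinates of $\bar{x}$, and once the pattern of $A(i,:)$ is a subpattern of \eqref{eq3.2}, the hypothesis that every row of $A$ has both a positive and a negative entry (a property preserved by $T_2$) forces row $k$ to retain its unique admissible positive entry in position $k+1$ and at least one strictly negative entry among positions $1,\dots,k$; your forward substitution then yields a strictly positive vector, and in fact shows that $x_2,\dots,x_n$ are determined by $x_1$, so the kernel of $A(i,:)$ is one-dimensional and spanned by a positive vector. Thus the ``obstacle'' you flag at the end is closed directly by the hypothesis on the rows of $A$, not by any further content of Lancaster's theorem.

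The genuine gap is your final step. In this lemma $A$ is a fixed real matrix (in the application it is $C-\lambda I$ for a specific $C\in Q(A)$ and a specific eigenvalue $\lambda$), so neither the magnitudes of the row $\bar{a}_i$ nor those of $\bar{x}$ are yours to choose: by your own step 2 the kernel of $A(i,:)$ is one-dimensional, and positive rescaling cannot make $\bar{a}_i\cdot\bar{x}$ vanish if it is nonzero. Indeed the unconditional claim you are trying to establish cannot be proved: $A=\begin{bmatrix}-1&1\\-2&1\end{bmatrix}$ with $i=2$ satisfies every hypothesis of the lemma yet is nonsingular, so it has no positive null vector. The paper's proof implicitly reads the statement conditionally --- it begins ``If $A\bar{x}=\bar{0}$ for some $\bar{x}\neq\bar{0}$'' --- that is, a nonzero solution is assumed to exist (as it does in Theorem \ref{thm3.18}, where $\lambda$ is an eigenvalue of $C$), and the actual content is that any such nonzero solution lies in the kernel of $A(i,:)$ and is therefore a nonzero scalar multiple of your positive forward-substitution vector, hence can be taken entrywise positive. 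Replacing your balancing argument for the $i$-th equation by this observation (and carrying the existence of a nontrivial solution as a hypothesis) gives the proof the paper intends.
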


\begin{proof}
If $A\bar{x}=\bar{0}$ for some $\bar{x}\neq\bar{0}$, then the signs for $x_i/x_j$ do not change under transformations $T_1,T_2,T_3$. So we can assume that $A(i,:)$ is a subpattern of the standard form. Since every row of $A$ has a positive and a negative entry, it is clear that $A\bar{x}=\bar{0}$ has a solution with all $x_i$ positive.
\end{proof}

We consider Type-I and Type-II patterns respectively as follows:
$$\begin{bmatrix}
-_0 & + & 0 & \cdots & 0 \\
- & -_0 & + & \ddots & \vdots \\
0 & - & -_0 & \ddots & 0 \\
\vdots & \ddots & \ddots & \ddots & + \\
0 & \cdots & 0 & - & -_0
\end{bmatrix} \qquad \mbox{and} \qquad 
\begin{bmatrix}
\# & + & 0 & \cdots & 0 \\
+ & \# & + & \ddots & \vdots \\
0 & + & \# & \ddots & 0 \\
\vdots & \ddots & \ddots & \ddots & + \\
0 & \cdots & 0 & + & \#
\end{bmatrix}.$$

\begin{lemma}\label{lem3.17}
If $B\bar{x}=\bar{0}$ for some $B$ of Type-II with non-negative diagonal and a positive vector $\bar{x}$, and $\alpha>0$ such that $B-\alpha I$ has negative diagonal, then there exists a positive vector $\bar{y}$ such that the last entry of $(B-\alpha I)\bar{y}$ is negative and all other entries are 0.
\end{lemma}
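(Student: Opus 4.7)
The plan is to construct $\bar{y}$ directly by solving the first $n-1$ coordinate equations of $(B-\alpha I)\bar{y}=\bar{0}$ and then checking that the $n$-th coordinate of $(B-\alpha I)\bar{y}$ is negative. Setting $y_1=x_1$, I would define $y_2,\ldots,y_n$ successively via the tridiagonal recursion
$$b_{i,i+1}y_{i+1}=(\alpha-b_{ii})y_i-b_{i,i-1}y_{i-1} \qquad (1\leq i\leq n-1),$$
where the $b_{i,i-1}y_{i-1}$ term is read as $0$ when $i=1$.

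The main obstacle is proving that the resulting $\bar{y}$ is coordinatewise positive; this is not visible from the recursion in isolation and is where the hypothesis $B\bar{x}=\bar{0}$ has to do the work. My plan is to pass to the ratios $r_i:=y_i/x_i$, so that $r_1=1$. Substituting $y_j=x_jr_j$ into the recursion and using the $i$-th equation of $B\bar{x}=\bar{0}$ — namely $b_{ii}x_i=-b_{i,i-1}x_{i-1}-b_{i,i+1}x_{i+1}$ (with $b_{1,0}x_0$ read as $0$ at $i=1$) — eliminates $b_{ii}$ and yields the clean identity
$$b_{i,i+1}x_{i+1}(r_{i+1}-r_i)=\alpha x_ir_i+b_{i,i-1}x_{i-1}(r_i-r_{i-1}), \qquad 1\leq i\leq n-1.$$
An induction on $i$ then gives $0<r_1<r_2<\cdots<r_n$: the right-hand side is strictly positive at each step because $\alpha$, the off-diagonal entries $b_{i,i\pm 1}$ (positive by the Type-II pattern), and the components $x_j$ are all positive, while inductively $r_i>r_{i-1}>0$. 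Hence $\bar{y}$ is positive.

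For the last coordinate I would perform the same substitution on $((B-\alpha I)\bar{y})_n=b_{n,n-1}y_{n-1}+(b_{nn}-\alpha)y_n$, using $b_{nn}x_n=-b_{n,n-1}x_{n-1}$ from the $n$-th row of $B\bar{x}=\bar{0}$, collapsing the expression into
$$((B-\alpha I)\bar{y})_n=-b_{n,n-1}x_{n-1}(r_n-r_{n-1})-\alpha x_nr_n,$$
which is strictly negative since $r_n>r_{n-1}$ and every remaining factor is positive. Bookkeeping at the boundary rows ($i=1$ and $i=n$), where the tridiagonal structure degenerates to two nonzero terms, is the only thing requiring care; once the ratio identity is in hand the entire induction runs smoothly.
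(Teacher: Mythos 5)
Your proof is correct, and the underlying construction is the same as the paper's: set $y_1=x_1$, determine $y_2,\dots,y_n$ by solving the first $n-1$ coordinate equations of $(B-\alpha I)\bar{y}=\bar{0}$, and then check that the $n$-th coordinate is negative. Where you differ is in how positivity of $\bar{y}$ is certified. The paper writes $y_i=x_i+\epsilon_i$ and verifies $\epsilon_i>0$ by expanding the first few equations explicitly (using the non-negativity of the diagonal and the negativity of the diagonal of $B-\alpha I$), then passes to the general case with an informal ``$\vdots$''; your substitution $r_i=y_i/x_i$ uses the $i$-th row of $B\bar{x}=\bar{0}$ to eliminate the diagonal entry altogether, giving the identity $b_{i,i+1}x_{i+1}(r_{i+1}-r_i)=\alpha x_i r_i+b_{i,i-1}x_{i-1}(r_i-r_{i-1})$ and a genuine induction showing $1=r_1<r_2<\cdots<r_n$, hence $\bar{y}>0$ (indeed $y_i>x_i$ for $i\geq 2$). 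This is tidier and slightly more general: it replaces the paper's ellipsis with a one-line inductive step, and it shows that the sign hypotheses on the diagonal are not actually needed for this lemma --- only $\alpha>0$, the positivity of the off-diagonal band, and $B\bar{x}=\bar{0}$ with $\bar{x}$ positive enter the argument. Your final computation $((B-\alpha I)\bar{y})_n=-b_{n,n-1}x_{n-1}(r_n-r_{n-1})-\alpha x_n r_n<0$ is exactly the paper's closing estimate rewritten in ratio form, so the two proofs agree on the endgame.
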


\begin{proof}
We may assume that $B$ has the form
$$B=\begin{bmatrix}
d_1 & b_1 & 0 & \cdots & 0 \\
a_1 & d_2 & b_2 & \ddots & \vdots \\
0 & a_2 & d_3 & \ddots & 0 \\
\vdots & \ddots & \ddots & \ddots & b_{n-1} \\
0 & \cdots & 0 & a_{n-1} & d_n
\end{bmatrix}$$
such that all $a_i,b_i$ are positive, and all $d_i$ are non-negative. Further, $d_i-\alpha<0$ for all $i$.

Consider $y_1=x_1$. Then we have
\begin{align*}
d_1x_1+b_1x_2=0 \quad \mbox{and} \quad (d_1-\alpha)y_1+b_1y_2=0 \Rightarrow & y_2=x_2+\frac{\alpha x_1}{b_1} \\
\Rightarrow & y_2=x_2+\epsilon_2, \hspace*{2 cm} \epsilon_2>0; \\[2 mm]
a_1x_1+d_2x_2+b_2x_3=0 \quad \mbox{and} \quad a_1y_1+(d_2-\alpha)y_2+b_2y_3=0 \Rightarrow & -d_2x_2-b_2x_3+(d_2-\alpha)(x_2+\epsilon_2)+b_2y_3=0 \\
\Rightarrow & y_3=x_3+\frac{\alpha x_2}{b_2}-\frac{(d_2-\alpha)\epsilon_2}{b_2} \\
\Rightarrow & y_3=x_3+\epsilon_3, \hspace*{2 cm} \epsilon_3>0; \\[2 mm]
a_2x_2+d_3x_3+b_3x_4=0 \quad \mbox{and} \quad a_2y_2+(d_3-\alpha)y_3+b_3y_4=0 \Rightarrow & -d_3x_3\left(1+\frac{\epsilon_2}{x_2}\right)-b_3x_4\left(1+\frac{\epsilon_2}{x_2}\right) \\
& +(d_3-\alpha)(x_3+\epsilon_3)+b_3y_4=0 \\
\Rightarrow & y_4=x_4\left(1+\frac{\epsilon_2}{x_2}\right)+\frac{d_3x_3\epsilon_2}{b_3x_2}+\frac{\alpha x_3}{b_3}-\frac{(d_3-\alpha)\epsilon_3}{b_3} \\
\Rightarrow & y_4=x_4+\epsilon_4, \hspace*{2 cm} \epsilon_4>0; \\[2 mm]
\vdots & \\[2 mm]
& y_n=x_n+\epsilon_n, \hspace*{2 cm} \epsilon_n>0.
\end{align*}
Since $a_{n-1}x_{n-1}+d_nx_n=0$,
\begin{align*}
a_{n-1}y_{n-1}+(d_n-\alpha)y_n=-d_nx_n\left(1+\frac{\epsilon_{n-1}}{x_{n-1}}\right)+(d_n-\alpha)(x_n+\epsilon_n)=-\frac{d_nx_n\epsilon_{n-1}}{x_{n-1}}-\alpha x_n+(d_n-\alpha)\epsilon_n<0.
\end{align*}
Hence the result follows.
\end{proof}

The following result characterizes sign pattern matrices with cycle graphs and both positive and negative 2-cycles that require algebraic positivity.

\begin{theorem}\label{thm3.18}
Let $A$ be a sign pattern matrix of the form \eqref{cycle} such that $D(A)$ contains both positive and negative 2-cycles. Then $A$ requires algebraic positivity if and only if the following conditions hold:
\begin{enumerate}
\item $a_{1n}=0$ and $a_{ij}=+$ for $j=i+1 \mod n$.
\item Either $A$ has a principal submatrix of the form
$$\begin{bmatrix}
+_0&+ \\ +&\#
\end{bmatrix} \quad \mbox{or} \quad 
\begin{bmatrix}
\#&+ \\ +&+_0
\end{bmatrix},$$
or $A$ has a principal submatrix of the form
$$\begin{bmatrix}
-_0 & + & 0 & \cdots & 0 \\
- & -_0 & + & \ddots & \vdots \\
0 & - & -_0 & \ddots & 0 \\
\vdots & \ddots & \ddots & \ddots & + \\
0 & \cdots & 0 & - & -_0
\end{bmatrix}$$
requiring singularity such that it is not a principal submatrix of any other principal submatrix of $A$ having the same form.
\end{enumerate}
\end{theorem}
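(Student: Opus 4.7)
The plan is to split the proof into necessity ($\Rightarrow$) and sufficiency ($\Leftarrow$). For necessity, I will first normalize $A$ so that Condition 1 holds, then extract Condition 2 from the structural constraints of the surviving 2-cycles. For sufficiency, I will verify algebraic positivity of each $B \in Q(A)$ either by shifting $B$ into the hypothesis of Lemma \ref{lem3.13}/Corollary \ref{cor3.14} (in the first sub-case of Condition 2) or by building positive left and right eigenvectors explicitly via Lemma \ref{lem3.17} (in the second sub-case).

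For necessity, I would start by assuming $A$ requires algebraic positivity. Lemma \ref{lem3.2} combined with Lemma \ref{lem2.6} (after possibly replacing $A$ by $-A$) lets me assume $D(A_+)$ contains a directed $n$-cycle, so $a_{i,i+1}=+$ whenever $j=i+1\bmod n$. Lemma \ref{lem3.3} then forces the vertex sets covered by positive and negative 2-cycles to be disjoint, so the sequence of backward arcs cannot change between $+$ and $-$ without passing through a $0$. Because $D(A)$ has 2-cycles of both signs by hypothesis, at least one backward arc must be $0$; cyclic relabeling (Lemma \ref{lem2.6}.3) places that arc at position $(1,n)$, giving $a_{1n}=0$ and hence Condition 1.

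For Condition 2, I would analyze the blocks of consecutive positive and negative 2-cycles separately. Near a positive 2-cycle on $\{i,i+1\}$, the $2\times 2$ principal submatrix has $+$ in both off-diagonal entries, and a careful application of Corollary \ref{cor2.4} at well-chosen rows and columns rules out the case $a_{ii}=a_{i+1,i+1}=-$; at least one of these two diagonal entries must therefore be $+_0$, which produces one of the two forms in the first alternative of Condition 2. If no positive 2-cycle admits such a $+_0$ diagonal neighbor, then the block of consecutive negative 2-cycles together with its flanking diagonal entries forms a principal submatrix of Type-I shape, and I would argue that this block must \emph{require} singularity: otherwise Lemma \ref{lem3.16} allows one to select $B\in Q(A)$ and a shift $\lambda$ so that $(B-\lambda I)$ has a non-negative null vector with some zero entries, contradicting Lemma \ref{lem2.2}. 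The maximality clause would be obtained by taking the smallest witnessing block, since enlarging it can destroy the required singularity.

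For sufficiency, I would fix $B\in Q(A)$. In the first sub-case, I pick $k$ with $a_{kk}=+_0$ inside a positive 2-cycle and form $C=B-b_{kk}I$, which has a zero diagonal entry at $(k,k)$ and otherwise only non-positive diagonal; the path-sign analysis of Lemma \ref{lem3.13}, adapted to accommodate a single positive 2-cycle at $k$, then shows $\adj(C)$ has uniformly signed off-diagonal entries, so $C$ is algebraically positive by Lemma \ref{lem3.8} and $B$ follows via Lemma \ref{lem2.6}.4. In the second sub-case, I would apply Lemma \ref{lem3.17} to the Type-I block with a suitable positive shift $\alpha$, obtaining a positive vector on the block's indices whose image under $B-\alpha I$ has a single strictly signed residual entry; extending this vector along the remaining $+$ arcs of the cycle yields positive right (and, by a symmetric argument, left) eigenvectors of $B$ at a simple real eigenvalue, after which Theorem \ref{thm1.1} delivers algebraic positivity. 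The main obstacle will be the necessity case analysis: many sign configurations must be ruled out via Corollary \ref{cor2.4}, and proving that the Type-I block \emph{requires} (rather than merely allows) singularity, together with the maximality clause, demands explicit construction of witnessing matrices in $Q(A)$ whenever these refined conditions fail.
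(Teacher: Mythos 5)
Your necessity argument contains a step that is false, and it is the load-bearing one. You claim that Corollary \ref{cor2.4}, applied near a positive 2-cycle, rules out $a_{ii}=a_{i+1,i+1}=-$, so that \emph{every} positive 2-cycle acquires a $+_0$ diagonal neighbour. But condition 2 is only a disjunction: a pattern can require algebraic positivity while one of its positive 2-cycles has both diagonal entries $-$, provided \emph{some other} Type-II block has a $+_0$ diagonal entry or some maximal Type-I block requires singularity. For instance, with $n=6$, blocks $\{1,2\}$ (positive 2-cycle, diagonal $-,-$), $\{3,4\}$ (negative 2-cycle), $\{5,6\}$ (positive 2-cycle with $a_{55}=0$), zero backward arcs between blocks and all forward arcs $+$, the theorem's conditions hold, so by the sufficiency you yourself accept, this pattern requires algebraic positivity; and Corollary \ref{cor2.4} does not even apply at $t=1,2$, since the $t$-th column of $A(t,:)$ there contains only $+$ entries. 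Your very next sentence (``if no positive 2-cycle admits such a $+_0$ neighbour\dots'') contradicts the claim just made, so the case analysis is incoherent. The paper instead negates \emph{both} alternatives of condition 2 at once: then all Type-II blocks have negative diagonal and no maximal Type-I block requires singularity, which makes $A$ potentially stable (Theorem 10.2.2 of \cite{BS95}); Lemma \ref{lem3.4} forces the end diagonal entries of a Type-I block to be $-_0$, and Proposition \ref{prop2.7} (if one of them is $0$) or Proposition \ref{prop2.3} (if both are $-$) gives the contradiction. Your fallback via Lemma \ref{lem3.16} does not deliver what you need: that lemma produces a \emph{strictly positive} null vector of a block, and padding it with zeros is not an eigenvector of the whole matrix because the positive forward arc entering the block leaves a nonzero residual; the vanishing eigenvector used in the paper is the left vector built in Proposition \ref{prop2.3}. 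Your reading of the maximality clause is also backwards: the singular Type-I submatrix must be a \emph{full} diagonal block of the partition \eqref{eq3.3} (not ``the smallest witnessing block''), precisely so that a factor $p_i$ of the characteristic polynomial vanishes at $0$ in the sufficiency proof.

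The sufficiency plan also breaks in your first sub-case. With positive 2-cycles present, $B-b_{kk}I$ is in general not SNS: already with two Type-II blocks, the determinant term using both positive 2-cycles together with the remaining negative diagonal entries and the term using only one of the 2-cycles have opposite signs, so Lemma \ref{lem3.5} and Theorem \ref{thm3.6} cannot give uniformly signed adjugate entries, and there is no ``adaptation'' of Lemma \ref{lem3.13} to lean on; moreover $b_{kk}$ need not majorize the other (arbitrary, $\#$) diagonal entries, so the shifted matrix need not have nonpositive diagonal. The paper's mechanism, needed in both sub-cases, is different: write $p(x)=p_1(x)\cdots p_k(x)-t$ with $t>0$ (this uses $a_{1n}=0$ and the single $+$ coupling between consecutive blocks), use Cauchy interlacing/Perron--Frobenius (or the singular Type-I block in Case II) to locate a block eigenvalue $\mu$ with positive block eigenvectors, apply the intermediate value theorem to get a real eigenvalue $\lambda>\mu$ of $B$ so that every diagonal entry of $B-\lambda I$ is negative, and then build positive right and left eigenvectors blockwise — Lemma \ref{lem3.17} for the Type-II blocks (not the Type-I block, as you have it) and Lemma \ref{lem3.16} for the Type-I blocks — finishing with the geometric-multiplicity-one observation and Lemma \ref{lem2.1}.2 to get simplicity before invoking Theorem \ref{thm1.1}. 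Your proposal omits the eigenvalue-location step and simply asserts simplicity, so even the second sub-case is not complete as sketched.
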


\begin{proof}
We assume that $A$ is a sign pattern matrix of the form \eqref{cycle} such that $D(A)$ contains both positive and negative 2-cycles.

\paragraph{To show that the conditions are necessary:} Suppose that $A$ requires algebraic positivity. Then using Lemma \ref{lem3.3}, we can conclude that $a_{1n}=0$ and $a_{ij}=+$ for $j=i+1 \mod n$. So we can partition $A$ in the form
\begin{equation}\label{eq3.3}
A=\begin{bmatrix}
A_{11} & A_{12} & O & \cdots & O \\
O & A_{22} & A_{23} & \ddots & \vdots \\
\vdots & \ddots & \ddots & \ddots & O \\
O & \cdots & O & \ddots & A_{k-1,k} \\
A_{k1} & O & \cdots & O & A_{kk}
\end{bmatrix},
\end{equation}
where each $A_{ii}$ is of either Type-I or Type-II. 

Suppose that conditions 1 and 2 are not satisfied. Then the diagonal entries of all $A_{ii}$ of Type-II are $-$ and no $A_{ii}$ of Type-I requires singularity. So each $A_{ii}$ is potentially stable (see \cite[Theorem 10.2.2]{BS95}). Therefore $A$ is potentially stable.

Without loss of generality, we may assume that $A_{11}$ is of Type-I. Suppose that $A_{11}$ is of order $r$. By Lemma \ref{lem3.4}, $a_{11}=-_0$ and $a_{rr}=-_0$.
\begin{enumerate}
\item If either $a_{11}=0$ or $a_{rr}=0$, then by Proposition \ref{prop2.7}, $A$ does not require algebraic positivity, a contradiction.
\item If $a_{11}=-$ and $a_{rr}=-$, then we can choose $C\in Q(A)$ and $\alpha\geq 0$ such that each column of $(C+\alpha I)(1,:)$ has both positive and negative entries. So by Proposition \ref{prop2.3}, $A$ does not require algebraic positivity, a contradiction. 
\end{enumerate} 
Therefore conditions 1 and 2 are necessary.

\paragraph{To show that the conditions are sufficient:}
Since $a_{1n}=0$ and $a_{ij}=+$ for $j=i+1 \mod n$, we can partition $A$ as in \eqref{eq3.3} so that each of $A_{12},A_{23},\ldots,A_{k-1,k},A_{k1}$ has exactly one nonzero entry $+$ at the bottom left corner of that matrix.

Let $C\in Q(A)$, and it be partitioned as $A$. If $p(x),p_1(x),p_2(x),\ldots,p_k(x)$ are the characteristic polynomials of $C,C_{11},C_{22},\ldots,C_{kk}$, respectively, then $p(x)=p_1(x)p_2(x)\cdots p_k(x)-t$, where $t=c_{12}c_{23}\cdots c_{n-1,n}c_{n1}>0$.

\paragraph{Case-I.} $A$ has a principal submatrix of the form
$$\begin{bmatrix}
+_0&+ \\ +&\#
\end{bmatrix} \quad \mbox{or} \quad 
\begin{bmatrix}
\#&+ \\ +&+_0
\end{bmatrix}.$$ 
Because of Lemma \ref{lem2.6}.4, we can assume that some $C_{ii}$ has a 2-by-2 principal submatrix with pattern 
$$\begin{bmatrix}
0&+ \\ +&\#
\end{bmatrix} \quad \mbox{or} \quad 
\begin{bmatrix}
\#&+ \\ +&0
\end{bmatrix}.$$
Each matrix with these patterns has a positive eigenvalue. Further, each $C_{ii}$ of Type-II is similar to a symmetric matrix. So by Cauchy's interlacing theorem, some $C_{ii}$ has a positive eigenvalue, say $\mu$. Without loss of generality, we can assume that $\mu$ is the largest among eigenvalues of all $C_{ii}$ of Type-II.

For each $C_{ii}$ of Type-II, there exists $\alpha_i$ such that $C_{ii}+\alpha_iI$ is an irreducible nonnegative matrix. So by using the Perron-Frobenius theorem, we can deduce that each $C_{ii}$ of Type-II has left and right positive eigenvectors corresponding to its largest eigenvalue.

Now $p(\mu)=-t<0$. So by intermediate value property, $C$ has an eigenvalue $\lambda>\mu$ so that all diagonal entries of $C-\lambda I$ are negative.

Let $C\bar{x}=\lambda\bar{x}$ for some nonzero $\bar{x}$. Since each of $C_{12},C_{23},\ldots,C_{k-1,k},C_{k1}$ has exactly one nonzero entry, which is positive, we can make following assertions:
\begin{enumerate}
\item Due to Lemma \ref{lem3.17}, we can choose all $x_i$ corresponding to columns of $C_{ii}$ of Type-II to be positive.
\item After removing $n$-th row, all rows corresponding to $C_{ii}$ of Type-II and all zero columns, we have a block diagonal matrix and each diagonal block is a subpattern of the standard form \eqref{eq3.2} such that each row contains a positive and a negative entry. So by Lemma \ref{lem3.16}, all $x_i$ corresponding to columns of $C_{ii}$ of Type-I are positive.  
\end{enumerate} 
Therefore $C$ has a positive right eigenvector corresponding to the eigenvalue $\lambda$.

Using similar arguments on $C^T$, it can be shown that $C$ has a positive left eigenvector corresponding to the eigenvalue $\lambda$.

Since $C$ has the form \eqref{cycle} such that $c_{1n}=0$ and $c_{ij}>0$ for $j=i+1 \mod n$, geometric multiplicity of the eigenvalue $\lambda$ is 1. So by Lemma \ref{lem2.1}.2, $\lambda$ is a simple eigenvalue of $C$. Therefore by Theorem \ref{thm1.1}, $C$ is algebraically positive, and hence $A$ requires algebraic positivity.

\paragraph{Case-II.} $A$ has a principal submatrix of the form
$$\begin{bmatrix}
-_0 & + & 0 & \cdots & 0 \\
- & -_0 & + & \ddots & \vdots \\
0 & - & -_0 & \ddots & 0 \\
\vdots & \ddots & \ddots & \ddots & + \\
0 & \cdots & 0 & - & -_0
\end{bmatrix}$$
requiring singularity such that it is not a principal submatrix of any other principal submatrix of $A$ having the same form. 

If the condition in Case-I does not hold, then all $C_{ii}$ of Type-II has negative diagonal. According to the given condition, we have one $C_{ii}$ of Type-I having eigenvalue 0. Then $p(0)=-t<0$. So by intermediate value property, $C$ has an eigenvalue $\lambda>0$. Without loss of generality, we may assume that $\lambda$ is larger than all eigenvalues of all $C_{ii}$ so that all diagonal entries of $C-\lambda I$ are negative. Therefore using similar arguments as in Case-I it can be shown that $A$ requires algebraic positivity.
\end{proof}

\section{Summary}\label{sec4}
As we have considered that $G(A)$ is a cycle, there are no directed $k$-cycles in $D(A)$ for $2<k<n$ and $A$ is of the form \eqref{cycle}. Lemma \ref{lem3.2} shows that a sign pattern matrix of the form \eqref{cycle} must have a directed $n$-cycle with positive arcs only for $A$ to require algebraic positivity. If all directed 2-cycles are positive, then $A$ is irreducible and all nonzero off-diagonal entries are positive so that $A$ requires algebraic positivity. If there are no loops, then sign patterns requiring algebraic positivity is given by Theorem \ref{thm3.10}. If there are some loops and all directed 2-cycles are negative, then the characterization of sign pattern requiring algebraic positivity is described by Theorem \ref{thm3.15}. If we have both positive and negative directed 2-cycles, then the characterization for $A$ to require algebraic positivity is described by Theorem \ref{thm3.18}. Thus Theorem \ref{thm3.10}, \ref{thm3.15} and \ref{thm3.18} together describes all sign pattern matrices with cycle graphs that require algebraic positivity.

\section*{Declaration of competing interest}
There is no competing interest associated with this work.

\section*{Data availability}
No data was used for the research described in the article.

\end{document}